\theoremstyle{plain}
\newtheorem{teo}{Theorem}[section]
\newtheorem{conj}{Conjecture}[section]
\newtheorem{prop}{Proposition}[section]
\newtheorem{defi}{Definition}[section]
\newtheorem{cor}{Corollary}[section]
\theoremstyle{definition}
\newtheorem{exe}{Example}[section]
\newtheorem{obs}{Remark}[section]
\begin{document}
\title[Codimension one distributions on compact toric orbifolds]
{On codimension one holomorphic distributions on compact toric orbifolds}

\author{Miguel Rodr\'iguez Pe\~na}
\address{Arnulfo Miguel Rodr\'iguez Pe\~na\\
Universidade Federal de S\~ao Jo\~ao del-Rei - UFSJ \\
Departamento de Estat\'istica, F\'isica e Matem\'atica - DEFIM \\
Ouro Branco MG, Brazil.}
\email{miguel.rodriguez.mat@ufsj.edu.br}

\date\today

\begin{abstract}
The number of singularities, counted with multiplicity, of a generic codimension one holomorphic distribution on a compact toric orbifold is determined. As a consequence, we provide a classification for regular distributions on rational normal scrolls and weighted projective spaces. 
Additionally, under specific conditions, we prove that the singular set of a codimension one holomorphic foliation on a compact toric orbifold admits at least one irreducible component of codimension two, and we also present a Darboux-Jouanolou type integrability theorem
for codimension one holomorphic foliations. Our results are exemplified through various illustrative examples. 
\end{abstract}

\maketitle


\section{Introduction}

In this paper, we are interested in the study of codimension one holomorphic distributions on compact toric orbifolds.
For this we will consider homogeneous coordinates \cite{Cox}.
Toric varieties form an important class of examples in algebraic geometry. Furthermore, its geometry
is fully determined by the combinatorics of its associated fan, which often makes computations
far more tractable. 

The study of holomorphic foliations on toric varieties is a recent topic and there are so many authors studying it, 
see for instance \cite{Mau,Ar1,Ar2,Ro,Rub,Ar3}. 
We remark that holomorphic distributions of codimension one have been given by several authors on certain varieties, such as: 
complex projective spaces \cite{OmCoJa,AGA,GJM,Jou}, Fano manifolds \cite{Ca,CCM}, and general complex manifolds \cite{Dem,Iza}.

Among the main applications in this paper, we study the codimension one regular holomorphic distributions on a compact toric orbifold.
It is worth mentioning the following Thurston characterization for $C^{\infty}$ regular codimension one foliations (in the real context):

\begin{teo} \cite{Tu}
Let $M$ be a closed connected smooth manifold. Then $M$ admits a $C^{\infty}$ regular codimension one foliation if
and only if $\chi(M)=0$.
\end{teo}

Regular distributions are quite rare, for example, on the complex projective space we have: 

\begin{teo} \cite[p.36]{Coo}, \cite{GHS} 
A holomorphic distribution $\mathcal{D}$ on $\mathbb{P}^{n}$ is non singular if and only if $n$ is odd, 
$\mathrm{codim}(\mathcal{D})=1$ and $\deg(\mathcal{D})=0$.
\end{teo}


For surfaces, M. Brunella completely classified regular foliations on complex projective surfaces $S$ with Kodaira 
dimension $\kappa(S)<2$. For surfaces of general type, the classification is open.
As a corollary of this classification we have:

\begin{cor} \cite{Bru}
Let $\mathcal{F}$ be a regular one dimensional holomorphic foliation on a smooth projective rational surface $S$. 
Then $S$ is an Hirzebruch surface and $\mathcal{F}$ is induced by a $\mathbb{P}^{1}$-fibration $S\rightarrow\mathbb{P}^{1}$.
\end{cor}

In higher dimensions, there is no classification of regular holomorphic foliations. 
However, Touzet proposed the following generalization of corollary above:

\begin{conj} \cite{Dru}
Let $\mathcal{F}$  be a regular holomorphic foliation on a rationally connected projective manifold $X$. 
Then $\mathcal{F}$ is induced by a fibration $X\rightarrow Y$ onto a projective manifold.
\end{conj}

\noindent Remember that $X$ is rationally connected if any two points of $X$ can be connected by a rational curve 
(for example, Fano manifolds). The conjecture was verified for weak Fano manifolds in \cite{Dru}, and is 
open already in dimension $3$. However, for a regular codimension one foliation on
a projective threefold, the Minimal Model Program can be used to greatly reduce the problem; the conjecture is proved 
in some special cases, and reduces the problem to understanding regular foliations with nef canonical class. 
For more details see \cite{AF,Dru,Fig}. 

It is important to note that the class of regular and nonintegrable codimension one distributions includes contact structures. 
Contact structures are currently attracting much interest, both from mathematicians and physicists, 
and their classification is still in progress. In \cite{Dru2} Druel proved that toric contact manifolds are either $\mathbb{P}^{2n+1}$ 
or $\mathbb{P}(\mathcal{T}_{\mathbb{P}^1\times\cdots\times\mathbb{P}^1})$. 
Several authors have studied complex contact structures, we refer \cite{Ca,Dru2,KPS} and references therein. \\

The paper is organized as follows: in Section $2$, we start with some preliminaries on toric varieties and orbifolds.
In Section $3$, after defining codimension one holomorphic distributions and foliations, we present two propositions in analogy
with results well-known on the complex projective spaces: in Proposition \ref{proa}, we give a condition for the singular set of a codimension one holomorphic foliation admits at least a codimension two irreducible component (in the case of the complex projective space, this result is due to Jouanolou \cite{Jou}), and in Proposition \ref{prob}, we give a Darboux-Jouanolou type integrability theorem for codimension one holomorphic foliations. Later, we present two Bott-type formulas for the number of singularities, counted with multiplicity, of a generic distribution, i.e. when the distribution has at most isolated singularities; see for instance Proposition 
\ref{teo1} and Proposition \ref{teo20}. 
To end this section, we present some examples where our results are applied. Finally, in Section $4$, we give some applications; 
we impose conditions for a distribution to be regular, and then we present some examples and applications, among them, we give a classification of regular distributions on rational normal scrolls (see Theorem \ref{A} and Corollary \ref{B}), 
and on weighted projective spaces (see Theorem \ref{wei}). 
We observe that, in particular, Theorem \ref{A} verifies Touzet's conjecture for rational normal scrolls, see also Corollary \ref{Me}.
Moreover, in Example $\ref{expr}$, we present a family of contact structures on weighted projective spaces.


\section{Preliminaries}

In this section, we recall some basic definitions and results about complete simplicial toric
varieties. For more details about toric varieties see \cite{Bras}, \cite{Cox2}, \cite{Cox}, \cite{Fu}, and \cite{Oda}.

Let $N\simeq\mathbb{Z}^n$ be a free $\mathbb{Z}$-module of rank $n$ and $M=\mathrm{Hom}_{\mathbb{Z}}(N, \mathbb{Z})$ be its dual. 
A \emph{toric variety} associated with a fan 
$\Delta=\{\sigma_1,\dots,\sigma_k\}$ in $N_{\mathbb{R}}=N\otimes_{\mathbb{Z}} \mathbb{R}\simeq \mathbb{R}^n$, 
is a topological space 
$$\mathbb{P}_{\Delta} := \big(\bigsqcup \, \mathcal{U}_{\sigma_i}\big) / \sim$$ 
endowed with an open covering by the affine toric varieties
$\mathcal{U}_{\sigma_i} = \mathrm{Spec} \, \mathbb{C}[\check{\sigma_i}\cap M]$ for each cone $\sigma_i\in\Delta$, where
$\sigma_i\prec\sigma_j$ ($\sigma_i$ is a face of $\sigma_j$) implies that $\mathcal{U}_{\sigma_i}$ is embedded into $\mathcal{U}_{\sigma_j}$ 
as an open subset; then, the varieties $\mathcal{U}_{\sigma_i}$ are glued together according to this rule into the 
toric variety $\mathbb{P}_{\Delta}$. It is a normal algebraic variety whose charts are defined by binomial relations.

A toric variety $\mathbb{P}_{\Delta}$ determined by a complete simplicial fan $\Delta$ is a \emph{compact complex orbifold}, i.e. a compact complex variety with at most quotient singularities; see Section $\ref{orbif}$. 
It is possible to show that an $n$-dimensional toric variety $\mathbb{P}_{\Delta}$ contain a complex torus $\mathbb{T}^n=(\mathbb{C}^*)^n$ 
as a Zariski open subset such that the action of $\mathbb{T}^n$ on itself extends to an action of $\mathbb{T}^n$ on $\mathbb{P}_{\Delta}$.

\subsection{The homogeneous coordinate ring \cite{Cox}} 

Let $\mathbb{P}_{\Delta}$ be the toric variety determined by a fan
$\Delta$ in $N_{\mathbb{R}}$. The one-dimensional cones of $\Delta$ form the set
$\Delta(1)$. We will assume that $\Delta(1)$ spans $N_{\mathbb{R}}$.

Each $\rho \in \Delta(1)$ corresponds to an irreducible 
$\mathbb{T}$-invariant Weil divisor $D_{\rho}$ in
$\mathbb{P}_{\Delta}$, where
$\mathbb{T}=N\otimes_{\mathbb{Z}}\mathbb{C}^{*} \simeq Hom_{\mathbb{Z}}(M,\mathbb{C}^*)$ is the torus acting
on $\mathbb{P}_{\Delta}$. The $\mathbb{T}$-invariant Weil divisors
on $\mathbb{P}_{\Delta}$ form a free abelian group of rank $|{\Delta}(1)|$, that will be denoted $\mathbb{Z}^{{\Delta}(1)}$. 
Thus an element $D\in \mathbb{Z}^{{\Delta}(1)}$ is a formal sum $D=\sum_{\rho}a_{\rho}D_{\rho}$.
The Weil divisor class group of $\mathbb{P}_{\Delta}$ is denoted by $\mathcal{A}_{n-1}(\mathbb{P}_{\Delta})$.
 
For each $\rho\in \Delta(1)$, introduce a variable $z_{\rho}$,
and consider the polynomial ring
$$\mathrm{S}=\mathbb{C}[z_{\rho}]=\mathbb{C}[z_{\rho} : \rho\in\Delta(1)].$$
Note that a monomial $\prod_{\rho}z_{\rho}^{a_{\rho}}$
determines a divisor $D=\sum_{\rho}a_{\rho}D_{\rho}$, and we set $z^{D}=\prod_{\rho}z_{\rho}^{a_{\rho}}$.
The degree of a monomial $z^{D}\in \mathrm{S}$ is defined by
$\deg(z^{D}) = [D]\in \mathcal{A}_{n-1}(\mathbb{P}_{\Delta})$. 
It is shown that 
$$\mathrm{S}=\bigoplus_{\alpha \in A_{n-1}(\mathbb{P}_{\Delta})}\mathrm{S}_{\alpha},$$
where
$\mathrm{S}_{\alpha}=\bigoplus_{\deg(z^{D})=\alpha}\mathbb{C} \cdot z^{D}$, with $\mathrm{S}_{\alpha}\cdot \mathrm{S}_{\beta}
=\mathrm{S}_{\alpha+\beta}$. 
The polynomial ring $\mathrm{S}$ is called \emph{homogeneous coordinate ring} of the toric variety $\mathbb{P}_{\Delta}$.

Denote by $\mathcal{O}_{\mathbb{P}_{\Delta}}$ the structure sheaf of $\mathbb{P}_{\Delta}$. Let $\mathcal{O}_{\mathbb{P}_{\Delta}}(D)$ be the coherent sheaf on $\mathbb{P}_{\Delta}$ determined by a Weil divisor $D$. If $\alpha = [D] \in \mathcal{A}_{n-1}(\mathbb{P}_{\Delta})$, then
$$\mathrm{S}_{\alpha}\simeq \mathrm{H}^0(\mathbb{P}_{\Delta},\mathcal{O}_{\mathbb{P}_{\Delta}}(D)).$$
If $\mathbb{P}_{\Delta}$ is a complete toric variety, then $\mathrm{S}_{\alpha}$ is finite dimensional for every $\alpha$, 
and in particular, $\mathrm{S}_0 = \mathbb{C}$.

\subsection{The toric homogeneous coordinates}

Given a toric variety $\mathbb{P}_{\Delta}$ of dimension $n$, 
the Weil divisor class group $\mathcal{A}_{n-1}(\mathbb{P}_{\Delta})$ is a finitely generated
abelian group of rank $r=k-n$, where $k=|\Delta(1)|$. 
For each cone $\sigma \in \Delta$, we get the monomial
$$z^{\widehat{\sigma}}=\prod_{\rho\notin \sigma(1)}z_{\rho},$$
where $\sigma(1)=\{\rho\in \Delta(1)\,:\,\rho \prec \sigma\}$.
Then define $\mathcal{Z}=V(\{z^{\widehat{\sigma}}\,:\,\sigma \in \Delta\})\subset \mathbb{C}^{\Delta(1)}$.
We have that $\mathcal{Z} \subset \mathbb{C}^{\Delta(1)}$ has codimension at least two, and 
$\mathcal{Z}=\left\{0\right\}$ when $r=1$; see \cite{Cox1}. Moreover, the group 
$G = Hom_{\mathbb{Z}}(\mathcal{A}_{n-1}(\mathbb{P}_{\Delta}), \mathbb{C}^*)$ 
acts naturally on $\mathbb{C}^{\Delta(1)}$ and leaves $\mathcal{Z}$ invariant.

\begin{teo}\cite{Cox}\label{Cox}
Let $\mathbb{P}_{\Delta}$ be an $n$-dimensional toric variety such that $\Delta(1)$
spans $N_{\mathbb{R}}$. Then $\mathbb{P}_{\Delta}$ is the geometric quotient $(\mathbb{C}^{\Delta(1)}-{\mathcal{Z}})/G$ if and only if
$\mathbb{P}_{\Delta}$ is an orbifold.
\end{teo}

The field of rational functions $K(\mathbb{P}_{\Delta})$ on $\mathbb{P}_{\Delta}$ is the
subfield of $\mathbb{C}(z_{\rho})=\mathrm{Frac}(\mathbb{C}[z_{\rho}])$ given by
$$K(\mathbb{P}_{\Delta})=\left\{\frac{P}{Q}\in
\mathbb{C}(z_{\rho}):P\in \mathrm{S}_{\alpha}, Q\in \mathrm{S}_{\alpha}\right\}.$$
It follows that the polynomials $P,Q \in \mathrm{S}_{\alpha}$ define a rational function 
$\frac{P}{Q}:\mathbb{P}_{\Delta}\dashrightarrow \mathbb{P}^1$.

Assume that $\mathbb{P}_{\Delta}$ is a complex orbifold. We say that $f\in \mathrm{S}_{\alpha }$ is \emph{quasi-homogeneous} of
degree $\alpha$. It follows that the equation $V=\left\{f=0\right\}$ is a
well-defined hypersurface in $\mathbb{P}_{\Delta}$. We say that $V$ is a quasi-homogeneous hypersurface
of degree $\alpha$. 

Given $\rho\in\Delta(1)$, denote by $n_\rho$ the unique generator of $\rho\,\cap N$. 
Consider the $r=k-n$ linearly independent over $\mathbb{Z}$ relations between the $n_{\rho}$ 
of type $\sum_{\rho}a_{i \rho} n_{\rho}=0$, $i=1,\ldots,r$. 
Then, there are $r$ vector fields $R_i=\sum_{\rho}a_{i \rho}z_{\rho}\frac{\partial}{\partial z_{\rho}}$, $i=1,\ldots,r$, 
tangent to the orbits of $G$ and $\mathrm{Lie}(G)=\langle R_1,\dots,R_r\rangle$. 
We will call these vector fields $R_i$, the radial vector fields on $\mathbb{P}_{\Delta}$; for more details, see \cite{Cox1}. \\

Let $\mathbb{P}_{\Delta}$ be a complete simplicial toric variety  of dimension $n$ where $\Delta(1)$
spans $N_{\mathbb{R}}$. We know that $|\Delta(1)|=n+r$, where $r$ is the 
rank of the finitely generated abelian group $\mathcal{A}_{n-1}(\mathbb{P}_{\Delta})$. 
We will denote $\Delta(1)=\left\{\rho_1,\dots,\rho_{n+r}\right\}$, $D_i=D_{\rho_i}$ and $z_i=z_{\rho_i}$
for all $i=1,\dots,n+r$. Then $\mathrm{S}=\mathbb{C}\left[z_1,\dots,z_{n+r}\right]$.

\subsection{Examples}

\begin{enumerate}
	\item \label{exe1} \textbf{Weighted projective spaces.} \cite{Cox2} Let $\omega_0,\dots,\omega_n$ be positive integers with 
$gcd(\omega_0,\dots,\omega_n)=1$. Set $\omega=(\omega_{0},\dots,\omega_{n})$. 
Consider the $\mathbb{C}^*$ action on $\mathbb{C}^{n+1}$ given by 
$$t\cdot(z_0,\dots,z_n)=(t^{\omega_0} z_0,\dots, t^{\omega_n} z_n).$$
Then, the weighted projective spaces is defined by
$$\mathbb{P}(\omega) = \left(\mathbb{C}^{n+1} - \{0\}\right) / \mathbb{C}^*.$$ 
If $\omega_0=\cdots=\omega_n=1$, then $\mathbb{P}(\omega)=\mathbb{P}^{n}$. When $\omega_0,\dots,\omega_n$ 
are pairwise coprime, we say that $\mathbb{P}(\omega)$ is a well formed weighted projective space and we have 
$$\mathrm{Sing}(\mathbb{P}(\omega))=\left\{\bar{e}_i\,|\,\omega_i>1\right\}.$$

Moreover, we have $\mathcal{A}_{n-1}(\mathbb{P}(\omega))\simeq\mathbb{Z}$ and $\deg(z_i)=\omega_i$ for all $0\leq i\leq n+1$.
Consequently the homogeneous coordinate ring of $\mathbb{P}(\omega)$ is given by
$\mathrm{S}=\oplus_{\alpha\geq 0}\mathrm{S}_{\alpha}$, where 
$$\mathrm{S}_{\alpha}=\bigoplus_{p_0\omega_0+\cdots+p_n\omega_n=\alpha} \mathbb{C}\cdot {z_0}^{p_0}\dots\,{z_n}^{p_n}.$$

\vskip 0.2cm

 \item \label{exe2} \textbf{Multiprojective spaces.} \cite{Cox2}
Consider $\mathbb{C}^{n+1}\times\mathbb{C}^{m+1}$ in coordinates $(z_1,z_2)=(z_{1,0},\ldots,z_{1,n},$ $z_{2,0},\ldots,z_{2,m})$. 
Now, consider the $(\mathbb{C}^*)^2$ action on $\mathbb{C}^{n+1}\times\mathbb{C}^{m+1}$ given by 
$$(\mu,\lambda)\cdot(z_1,z_2)=(\mu z_{1,0},\dots,\mu z_{1,n},\lambda z_{2,0},\dots,\lambda z_{2,m}).$$
Then, the multiprojective spaces is defined by 
$$\mathbb{P}^n\times\mathbb{P}^m=\left(\mathbb{C}^{n+1}\times\mathbb{C}^{m+1}-\mathcal{Z}\right)\,/\,(\mathbb{C}^{*})^2,$$
where $\mathcal{Z}=\left(\left\{0\right\}\times\mathbb{C}^{m+1}\right) \cup \left(\mathbb{C}^{n+1}\times\left\{0\right\}\right)$.
Moreover, we have $\mathcal{A}_{n+m-1}(\mathbb{P}^n\times\mathbb{P}^m)\simeq\mathbb{Z}^2$,
$\deg(z_{1,i})=(1,0)$ for all $0\leq i\leq n$, and $\deg(z_{2,j})=(0,1)$ for all $0\leq j\leq m$.
Consequently the homogeneous coordinate ring of $\mathbb{P}^n\times\mathbb{P}^m$ is given by 
$\mathrm{S}=\oplus_{\alpha,\,\beta\geq 0}\mathrm{S}_{(\alpha,\beta)}$, where 
$$\mathrm{S}_{(\alpha,\beta)}=\bigoplus_{p_0+\cdots+p_n=\alpha\,;\,q_0+\cdots+q_m=\beta} \mathbb{C}\cdot {z_{1,0}}^{p_0}\dots\,{z_{1,n}}^{p_n} 
{z_{2,0}}^{q_0}\dots\,{z_{2,m}}^{q_m},$$
is the ring of bihomogeneous polynomials of bidegree $(\alpha,\beta)$. More generally, we can define 
$\mathbb{P}^{n_1}\times\cdots\times\mathbb{P}^{n_k}$ (in general, a finite product of toric varieties is again a toric variety). \\

 \item \label{exe3} \textbf{Hirzebruch surfaces.} \cite{Cox2} Let $r\geq 0$ be an integer. The Hirzebruch surface $\mathcal{H}_r$
is a rational ruled surface defined in $\mathbb{P}^1\times\mathbb{P}^2$ by
$$\big\{\big((\lambda_0:\lambda_1),(\mu_0:\mu_1:\mu_2)\big)\,\,|\,\,\lambda_0^r\mu_0=\lambda_1^r\mu_1\big\}.$$
In particular $\mathcal{H}_0\simeq\mathbb{P}^1\times\mathbb{P}^1$ and 
$\mathcal{H}_1\simeq\mathrm{Bl}_p(\mathbb{P}^2)$ (Blow-up of $\mathbb{P}^2$ at a point $p$).

Consider the $(\mathbb{C}^*)^2$ action on $\mathbb{C}^2\times\mathbb{C}^2$ given as follows 
$$(\lambda,\mu)(z_{1,1},z_{1,2},z_{2,1},z_{2,2})=(\lambda z_{1,1}, \mu z_{1,2}, \lambda z_{2,1},\mu\lambda^r z_{2,2}).$$
The quotient representation of the Hirzeburch surface is given by 
$$\mathcal{H}_r=(\mathbb{C}^2\times\mathbb{C}^2-\mathcal{Z})\,/\,(\mathbb{C}^*)^2,$$
where $\mathcal{Z}=Z(z_{1,1},z_{2,1}) \cup Z(z_{1,2},z_{2,2})$.
Moreover, we have $\mathcal{A}_{1}(\mathcal{H}_r)\simeq\mathbb{Z}^2$ and the homogeneous coordinate ring associated to 
$\mathcal{H}_r$ is given by $\mathrm{S}=\oplus_{\alpha\geq 0,\,\beta\geq 0}\mathrm{S}_{(\alpha,\beta)}$, where 
$$\mathrm{S}_{(\alpha,\beta)}=\bigoplus_{\alpha=p_1+q_1+rq_2\,;
\,\beta=p_2+q_2} \mathbb{C}\cdot {z_{1,1}}^{p_1}{z_{1,2}}^{p_2}{z_{2,1}}^{q_1}{z_{2,2}}^{q_2}.$$
In particular $\deg(z_{1,1})=(1,0)$, $\deg(z_{1,2})=(0,1)$, $\deg(z_{2,1})=(1,0)$ and $\deg(z_{2,2})=(r,1)$. \\

 \item \label{exe4} \textbf{Rational normal scrolls.} 
Let $a_1,\ldots,a_n$ be integers. Consider the $(\mathbb{C}^*)^2$ action on $\mathbb{C}^2\times\mathbb{C}^n$ given by 
$$(\lambda,\mu)(z_{1,1},z_{1,2},z_{2,1},\dots,z_{2,n})=(\lambda z_{1,1}, \lambda z_{1,2}, \mu\lambda^{-a_1} z_{2,1},
\dots,\mu\lambda^{-a_n} z_{2,n}).$$
Then, an $n$-dimensional rational normal scroll $\mathbb{F}(a_1,\dots,a_n)$ is a projective toric manifold defined by 
$$\mathbb{F}(a_1,\dots,a_n)=(\mathbb{C}^2\times\mathbb{C}^n-\mathcal{Z})\,/\,(\mathbb{C}^*)^2,$$
where $\mathcal{Z}=\left(\left\{0\right\}\times\mathbb{C}^{n}\right) \cup \left(\mathbb{C}^{2}\times\left\{0\right\}\right)$.
Moreover, we have $\mathcal{A}_{n-1}(\mathbb{F}(a_1,\dots,a_n))\simeq\mathbb{Z}^2$ and the homogeneous coordinate ring associated to 
$\mathbb{F}(a_1,\dots,a_n)$ is given by $\mathrm{S}=\oplus_{\alpha\in\mathbb{Z},\,\beta\geq 0}\mathrm{S}_{(\alpha,\beta)}$, where 
$$\mathrm{S}_{(\alpha,\beta)}=\bigoplus_{\alpha=p_1+p_2 - \sum_iq_ia_i\,;
\,\beta=\sum_iq_i} \mathbb{C}\cdot {z_{1,1}}^{p_1}{z_{1,2}}^{p_2}{z_{2,1}}^{q_1}\dots\,{z_{2,n}}^{q_n}.$$
In particular $\deg(z_{1,1})=\deg(z_{1,2})=(1,0)$ and $\deg(z_{2,i})=(-a_i,1)$.

Consider  $1\leq a_1 \leq a_2 \leq \dots \leq a_n$ integers, it is possible to show that $\mathbb{F}(a_1,\dots,a_n)\simeq\mathbb{P}(\mathcal{O}_{\mathbb{P}^1}(a_1)\oplus\dots\oplus\mathcal{O}_{\mathbb{P}^1}(a_n))$. For more details see \cite{Cox2, Re}.
\end{enumerate}

\subsection{Orbifolds} \label{orbif}\cite{Rua, Bla, Sata, Ma} An $n$-dimensional \emph{orbifold} $X$ is a complex space endowed with the following property: each point $x\in X$ possesses a open neighborhood, which is the quotient $U_x=\widetilde{U}/G_x$, where $\widetilde{U}$
is a complex manifold of dimension $n$, and $G_x$ is a properly discontinuous finite group of automorphisms of $\widetilde{U}$.
Thus, locally we have a quotient map $\pi_x:(\widetilde{U},\tilde{x})\rightarrow(U_x,x)$.

Since the possible singularities appearing in an orbifold $X$ are quotient singularities, we have that $X$ is reduced, normal, Cohen-Macaulay,  and with only rational singularities. 

Satake's fundamental idea was to use local smoothing coverings to extend the
definition of usual known objects to orbifolds. For instance, smooth differential $k$-forms
on $X$ are $C^{\infty}$-differential $k$-forms $\omega$ on $X_{\mathrm{reg}}=X \setminus \mathrm{Sing}(X)$ such that the pull-back
$\pi^{\ast}_x\omega$ extends to a $C^{\infty}$-differential $k$-form on every local smoothing covering 
$\pi_x:(\widetilde{U},\tilde{x})\rightarrow(X,x)$ of $X$. Hence, if $\omega$ is a smooth $2n$-form on $X$ with compact support 
$\mathrm{Supp}(\omega)\subset(X, x)$, then by definition
$$\int_X^{orb} \omega=\frac{1}{\# G_x}\int_{\widetilde{U}}\pi^{\ast}\omega.$$

If now $\omega$ has compact support, we use a partition of unity 
$\left\{\rho_{\alpha},U_{\alpha}\right\}_{\alpha\in\Lambda}$, where
$\pi_{\alpha}:(\widetilde{U}_{\alpha},\tilde{x}_{\alpha})\rightarrow(U_{\alpha},x_{\alpha})$ is a local smoothing covering and
$\sum \rho_{\alpha}(x)=1$ for all $x\in\mathrm{Supp}(\omega)$, and set
$$\int_X^{orb} \omega=\sum_{\alpha}\int_{X}^{orb}\rho_{\alpha}\omega.$$

\begin{obs}\cite{Ma} Let $X$ be compact orbifold and 
$Ker(X)=\left\{g\in\coprod_{\alpha\in\Lambda}G_{\alpha}:\,g\cdot x=x,\,\forall x\in X\right\}$. Then 
$$\int_X^{orb} \omega=\frac{1}{\# Ker(X)}\int_{X_{reg}}\omega.$$
\end{obs}

Analogous to the manifold case, we have the sheaf of $C^{\infty}$-differential $k$-forms and
exterior differentiation, moreover, the concepts of connection and curvature are well defined. 
Stokes' formula, Poincar\'e's duality and Rham's theorem also hold on orbifolds.


\section{Codimension one holomorphic distributions}

There exists an exact sequence known as the generalized Euler's sequence 
$$0\rightarrow \mathcal{O}_{\mathbb{P}_{\Delta}}^{\oplus
r}\rightarrow\bigoplus_{i=1}^{n+r}\mathcal{O}_{\mathbb{P}_{\Delta}}(D_i)\rightarrow
\mathcal{T}\mathbb{P}_{\Delta}\rightarrow0,$$
\noindent where
$\mathcal{T}\mathbb{P}_{\Delta}=\mathcal{H}om(\Omega_{\mathbb{P}_{\Delta}}^1,\mathcal{O}_{\mathbb{P}_{\Delta}})$ is the so-called Zariski tangent sheaf of $\mathbb{P}_{\Delta}$. 
Let $i:\mathbb{P}_{\Delta \,\mathrm{reg}} \rightarrow \mathbb{P}_{\Delta}$
be the inclusion of the regular part $\mathbb{P}_{\Delta \,\mathrm{reg}}= \mathbb{P}_{\Delta} - \mathrm{Sing}(\mathbb{P}_{\Delta})$.
Since $\mathbb{P}_{\Delta}$ is a complex orbifold then $\mathcal{T}\mathbb{P}_{\Delta} \simeq 
i_{\ast}\mathcal{T}\mathbb{P}_{\Delta \,\mathrm{reg}}$, where $\mathcal{T}\mathbb{P}_{\Delta \,\mathrm{reg}}$ is the tangent sheaf of $\mathbb{P}_{\Delta \,\mathrm{reg}}$; 
see \cite[Appendix A.2]{Cox3}.\\

Let $\mathcal{O}_{\mathbb{P}_{\Delta}}(d)=\mathcal{O}_{\mathbb{P}_{\Delta}}(d_1,\ldots,d_{n+r})
=\mathcal{O}_{\mathbb{P}_{\Delta}}(\sum_{i=1}^{n+r}d_{i}D_{i})$, 
where $\sum_{i=1}^{n+r}d_{i}D_{i}$ is a Cartier divisor. \\ 
Set
$\Omega^1_{\mathbb{P}_{\Delta}}(d)=\Omega^1_{\mathbb{P}_{\Delta}}\otimes \mathcal{O}_{\mathbb{P}_{\Delta}}(d)$.
Tensorizing the dual Euler's sequence by $\mathcal{O}_{\mathbb{P}_{\Delta}}(d)$ we get
\begin{eqnarray*}
0 \rightarrow \Omega^1_{\mathbb{P}_{\Delta}}(d)
\rightarrow \bigoplus_{i=1}^{n+r}\mathcal{O}_{\mathbb{P}_{\Delta}}(d_1,\dots,d_i-1,\dots,d_{n+r})
\rightarrow Cl(\mathbb{P}_{\Delta})\otimes_{\mathbb{Z}}\mathcal{O}_{\mathbb{P}_{\Delta}}(d) \rightarrow 0.
\end{eqnarray*}
For more details, see for instance \cite{Cox1, Cox2}.

\begin{defi} 
A codimension one holomorphic distribution $\mathcal{D}$ on $\mathbb{P}_{\Delta}$ of degree 
$d=\sum_{i=1}^{n+r}d_{i}\left[D_{i}\right]\in\mathcal{A}_{n-1}(\mathbb{P}_{\Delta})$
is a global section $\omega$ of $\Omega^1_{\mathbb{P}_{\Delta}}(d)$.  
For simplicity of notation we say that $\mathcal{D}$ has degree $d=(d_1,\dots,d_{n+r})$.
A codimension one holomorphic foliation $\mathcal{F}$ on $\mathbb{P}_{\Delta}$ is a distribution that satisfies 
the Frobenius integrability condition $\omega\wedge d\omega=0$.
We will consider codimension one holomorphic foliations whose singular scheme has codimension greater than $1$.
\end{defi}

If
$H^1\big(\mathbb{P}_{\Delta}, \Omega^1_{\mathbb{P}_{\Delta}}(d)\big)=0$; see for instance the Bott-Steenbrink-Danilov vanishing theorem
\cite[Theorem $9.3.1$]{Cox2}, 
then we have that a codimension one holomorphic distribution $\mathcal{D}$ on
$\mathbb{P}_{\Delta}$ of degree $d$ is given by a polynomial form in homogeneous coordinates of the form
$$
\omega=\sum_{i=1}^{n+r}P_i \, dz_i,
$$
where $P_i$ is a quasi-homogeneous polynomial of degree $(d_1,\dots,d_i-1,\dots,d_{n+r})$ for all $i=1,\dots,n+r$, such that
$i_R \, \omega=0$ for all $R\in\mathrm{Lie}(G)$. 
Note that $\omega$ is a quasi-homogeneous form of degree $d$. We define the singular set of $\mathcal{D}$ by 
$$\mathrm{Sing}(\mathcal{D})=\pi\left(\left\{p\in\mathbb{C}^{n+r}:\,\omega(p)=0\right\}\right),$$
where $\pi:\mathbb{C}^{n+r}-{\mathcal{Z}} \rightarrow \mathbb{P}_{\Delta}$ is the canonical projection. 

Let $V = \{f = 0\}$ be a quasi-homogeneous hypersurface. We recall that $V$ is \emph{invariant} by $\mathcal{D}$ if and only if 
there is a holomorphic $2$-form $\Theta$ in homogeneous coordinates, such that $\omega\wedge d f = f\cdot\Theta$. 
A \emph{rational first integral} of $\mathcal{D}$ is a non-constant rational function $f$ such that $\omega\wedge df=0$.

\subsection{Examples}

\begin{enumerate}
	\item \textbf{Weighted projective spaces.}
The Euler's sequence on $\mathbb{P}(\omega)$ is an exact sequence of orbibundles
$$
0\longrightarrow
\underline{\mathbb{C}}\longrightarrow\bigoplus_{i=0}^{n
}\mathcal{O}_{\mathbb{P}(\omega)}(\omega_i) \longrightarrow
T\mathbb{P}(\omega) \longrightarrow 0,
$$
where $\underline{\mathbb{C}}$ is the trivial line orbibundle on
$\mathbb{P}(\omega)$. The radial vector field is given by
$$R = \omega_0 z_0 \frac{\partial}{\partial z_0}+\cdots+ \omega_n z_n \frac{\partial}{\partial z_n}.$$
 \item \textbf{Multiprojective spaces.} The Euler's sequence on $\mathbb{P}^{n}\times\mathbb{P}^{m}$ is
$$0\longrightarrow \underline{\mathbb{C}^{2}}\,
{\longrightarrow}
\mathcal{O}_{\mathbb{P}^{n}\times\mathbb{P}^{m}}(1,0)^{\oplus n+1}\oplus\mathcal{O}_{\mathbb{P}^{n}\times\mathbb{P}^{m}}(0,1)^{\oplus m+1}
{\longrightarrow}\,
T(\mathbb{P}^{n}\times\mathbb{P}^{m})
\longrightarrow 0.$$
Here, the radial vector fields are given by
$$R_1 = z_{1,0} \frac{\partial}{\partial z_{1,0}}+\cdots+ z_{1,n} \frac{\partial}{\partial z_{1,n}},\,\,\mbox{and}\,\,\,
R_2 = z_{2,0} \frac{\partial}{\partial z_{2,0}}+\cdots+ z_{2,m} \frac{\partial}{\partial z_{2,m}}.$$

 \item \textbf{Hirzebruch surface.} The Euler's sequence on $\mathcal{H}_r$ is
$$0 \rightarrow\mathcal{O}_{\mathcal{H}_r}^{\oplus2}\rightarrow\mathcal{O}_{\mathcal{H}_r}(1,0)\oplus\mathcal{O}_{\mathcal{H}_r}(0,1)\oplus\mathcal{O}_{\mathcal{H}_r}(1,0)\oplus\mathcal{O}_{\mathcal{H}_r}(r,1)\rightarrow \mathcal{T}\mathcal{H}_r\rightarrow0.$$
Here, the radial vector fields are given by
$$R_1 = z_{1,1}\frac{\partial}{\partial z_{1,1}}+ z_{2,1}\frac{\partial}{\partial z_{2,1}} 
+ r z_{2,2}\frac{\partial}{\partial z_{2,2}},\,\,\mbox{and}\,\,\, 
R_2 = z_{1,2}\frac{\partial}{\partial z_{1,2}}+ z_{2,2}\frac{\partial}{\partial z_{2,2}}.$$

 \item \textbf{Rational normal scrolls.} The Euler's sequence on $\mathbb{F}(a):=\mathbb{F}(a_1,\dots,a_n)$ is
$$0 \rightarrow\mathcal{O}_{\mathbb{F}(a)}^{\oplus
2}\rightarrow\mathcal{O}_{\mathbb{F}(a)}(1,0)^{\oplus 2}\oplus\bigoplus_{i=1}^n
\mathcal{O}_{\mathbb{F}(a)}(-a_i,1)\rightarrow \mathcal{T}\mathbb{F}(a)\rightarrow0.$$
Here, the radial vector fields are given by
$$R_1 = z_{1,1}\frac{\partial}{\partial z_{1,1}}+ z_{1,2}\frac{\partial}{\partial z_{1,2}} 
- \sum_{i=1}^{n}a_i z_{2,i}\frac{\partial}{\partial z_{2,i}},\,\,\mbox{and}\,\,\, 
R_2 = \sum_{i=1}^{n}z_{2,i}\frac{\partial}{\partial z_{2,i}}.$$
 \end{enumerate}

\bigskip

The proposition below is attributed to Jouanolou \cite[Proposition $2.6$]{Jou}, specifically in the context of the complex projective space.

\begin{prop} \label{proa} Let $\mathbb{P}_{\Delta}$ be a complete simplicial toric variety of dimension $n$, with homogeneous coordinates 
$z_1,\ldots,z_{n+r}$.
Let $\omega\in\Omega^1_{\mathbb{P}_{\Delta}}(d)$ be nonzero integrable, 
and suppose there exists a radial vector field 
$R=\sum_ia_iz_i\frac{\partial}{\partial z_i}$ on $\mathbb{P}_{\Delta}$ such that $\sum_ia_id_i\neq0$. 
Then, the singular set of $\omega$ must contain at least a codimension two irreducible component. 
\end{prop}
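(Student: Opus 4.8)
The plan is to argue by contradiction, the main tool being the de Rham--Saito division lemma together with the $\mathcal{A}_{n-1}(\mathbb{P}_{\Delta})$-grading. First I would pass to homogeneous coordinates and write $\omega=\sum_{i=1}^{n+r}P_i\,dz_i$, with $P_i$ quasi-homogeneous of degree $d-[D_i]$ and $i_{R'}\omega=0$ for every $R'\in\mathrm{Lie}(G)$; in particular $i_R\omega=0$ for the radial field $R=\sum_i a_iz_i\frac{\partial}{\partial z_i}$ of the statement. Put $\lambda:=\sum_i a_id_i\neq 0$. I would then record the Euler-type identity $L_R\omega=\lambda\,\omega$: writing $\theta(\cdot)$ for the additive functional on $\mathcal{A}_{n-1}(\mathbb{P}_{\Delta})$ with $\theta([D_i])=a_i$ and $i_R\,df=\theta(\deg f)\,f$ (formula \eqref{eq2}), one gets $L_RP_i=(\theta(d)-a_i)P_i$ and $L_R(dz_i)=a_i\,dz_i$, hence $L_R(P_i\,dz_i)=\theta(d)\,P_i\,dz_i$ and, summing, $L_R\omega=\theta(d)\,\omega=\lambda\,\omega$. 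Finally I would assume, for contradiction, that every irreducible component of $Z(\omega):=\{p\in\mathbb{C}^{n+r}:\omega(p)=0\}$ has codimension at least three in $\mathbb{C}^{n+r}$.

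Granting this, I would apply division. Integrability gives $\omega\wedge d\omega=0$, and since $\mathbb{C}^{n+r}$ is smooth with $\mathrm{codim}\,Z(\omega)\geq 3$, the de Rham--Saito division lemma yields a $1$-form $\theta$ with $d\omega=\omega\wedge\theta$. Here I would invoke the grading: $\omega$ and $d\omega$ are homogeneous of (form-)degree $d$, so decomposing $\theta$ into homogeneous components only the degree-zero part $\theta_0=\sum_iQ_i\,dz_i$ contributes, and $d\omega=\omega\wedge\theta_0$. But then each $Q_i$ lies in $\mathrm{S}_{-[D_i]}\simeq\mathrm{H}^0(\mathbb{P}_{\Delta},\mathcal{O}_{\mathbb{P}_{\Delta}}(-D_i))$, which vanishes because $\mathbb{P}_{\Delta}$ is complete and $-D_i$ is not effective. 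Hence $\theta_0=0$, so $d\omega=0$.

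To finish, combining $d\omega=0$ with $i_R\omega=0$ and Cartan's formula gives $L_R\omega=i_R\,d\omega+d\,i_R\omega=0$, whereas $L_R\omega=\lambda\,\omega$ with $\lambda\neq 0$; therefore $\omega=0$, contradicting that $\omega$ is nonzero. It follows that $Z(\omega)$ has an irreducible component of codimension at most two, and since the singular scheme of a codimension one foliation has codimension greater than one it has no component of codimension one; so that component has codimension exactly two, which is the assertion read for the zero set in homogeneous coordinates (equivalently, via $\pi$, for $\mathrm{Sing}(\omega)$).

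The step I expect to be the main obstacle is applying the de Rham--Saito division lemma and carrying out the graded bookkeeping correctly, together with the matter of where codimension is measured: a codimension-two component of $Z(\omega)$ may be contained in the exceptional locus $\mathcal{Z}$ rather than descend to a genuine codimension-two subvariety of $\mathbb{P}_{\Delta}$, which is why the conclusion is best phrased for the zero set in homogeneous coordinates. Note finally that the hypothesis $\sum_i a_id_i\neq 0$ enters only at the last step, where it forces $\omega=0$.
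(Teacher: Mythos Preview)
Your proposal is correct and follows essentially the same route as the paper: compute $L_R\omega=\lambda\omega$ (the paper does this via the flow of $R$, you via the Euler formula on coefficients---equivalent computations), apply the de Rham--Saito division lemma under the assumption $\mathrm{codim}\,Z(\omega)\geq 3$ to write $d\omega=\omega\wedge\theta$, and then contradict $\lambda\neq 0$ via Cartan's formula. In fact your argument is slightly more explicit than the paper's at the key step: the paper simply asserts ``and so $d\omega=0$'' after invoking Saito's lemma, whereas you spell out the graded bookkeeping showing that the degree-zero part $\theta_0$ must vanish because $\mathrm{S}_{-[D_i]}=0$ on a complete toric variety; your final caveat about whether the codimension-two component might lie in $\mathcal{Z}$ is a legitimate technical point that the paper does not address.
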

\begin{proof} Let us first prove the following identity
$$i_R \left(d\omega\right)=\big(\Sigma_ia_id_i\big)\omega.$$
Let $R_t(z)=\left(e^{a_1t}z_1,\ldots,e^{a_{n+r}t}z_{n+r}\right)$ be the flow of $R$; $R_0(z)=z$ and $R_0'(z)=R(z)$. Set 
$\omega=\sum_i P_i(z)dz_i$. Then 
\begin{eqnarray*}
\mathcal{L}_R(\omega)&=&\frac{d}{d t}R^{\ast}_t(\omega)\Big|_{t=0}=
\frac{d}{d t}\sum_iP_i\left(e^{a_1t}z_1,\ldots,e^{a_{n+r}t}z_{n+r}\right)d\left(e^{a_it}z_i\right)\Big|_{t=0} \\
&=&\frac{d}{d t}\left(e^{\left(\sum_ia_id_i\right)t}\right)\Big|_{t=0}\cdot\sum_iP_i(z)dz_i=\big(\sum_ia_id_i\big)\omega,
\end{eqnarray*}
and consequently we have $(\sum_ia_id_i)\omega=\mathcal{L}_R(\omega)=i_R(d\omega)+d(i_R\omega)=i_R(d\omega)$.
Now, the proposition follows from Saito's division Lemma \cite{Jou, Sai}. In fact, suppose that $\mathrm{codim\,Sing}(\omega)\geq 3$, the condition $\omega\wedge d\omega=0$ implies that $d\omega=\alpha\wedge\omega$ for some polynomial $1$-form $\alpha$, and so 
$d\omega=0$. Therefore we have $\left(\sum_i a_id_i\right)\omega=i_R(d\omega)=0$, an absurd.
\end{proof}

Let us prove the following result due to Darboux, in the case of the complex projective space.

\begin{prop} \label{prob} Let $\mathbb{P}_{\Delta}$ be a complete simplicial toric variety of dimension $n$, with homogeneous coordinates $z_1,\ldots,z_{n+r}$ and Picard number $r=rank \,\mathcal{A}_{n-1}(\mathbb{P}_{\Delta})$.
Let $\mathcal{F}$ be a codimension one holomorphic foliation of degree $d$ on $\mathbb{P}_{\Delta}$. Set 
$h_{\alpha}=\dim_{\mathbb{C}}H^0(\mathbb{P}_{\Delta}, \mathcal{O}_{\mathbb{P}_{\Delta}}(\alpha))=\dim_{\mathbb{C}} \mathrm{S}(\Delta)_{\alpha}$.
If $\mathcal{F}$ admits 
$$\mathcal{N}_{\Delta}=2+\sum_{1\leq i<j \leq n+r} h_{d-\deg(z_i)-\deg(z_j)}$$
invariant irreducible quasi-homogeneous hypersurfaces, then $\mathcal{F}$ admits a rational first integral.

In particular, $\mathcal{F}$ admits a rational first integral if and only if it admits an infinite number of
invariants irreducible quasi-homogeneous hypersurfaces.
\end{prop}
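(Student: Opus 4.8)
The plan is to follow the classical Darboux/Jouanolou argument, lifting everything to the homogeneous coordinate ring $\mathrm{S}=\mathbb{C}[z_1,\dots,z_{n+r}]$ and working with the polynomial $1$-form $\omega=\sum_i P_i\,dz_i$ that represents $\mathcal{F}$. First I would record the key bookkeeping fact: if $V_1=\{f_1=0\},\dots,V_\ell=\{f_\ell=0\}$ are $\mathcal{F}$-invariant irreducible quasi-homogeneous hypersurfaces with $f_k$ of degree $\alpha_k\in\mathcal{A}_{n-1}(\mathbb{P}_\Delta)$, then invariance gives holomorphic $2$-forms $\Theta_k$ with $\omega\wedge df_k=f_k\,\Theta_k$, hence the logarithmic form $\eta=\sum_{k=1}^{\ell}\lambda_k\,\frac{df_k}{f_k}$ (for constants $\lambda_k$) satisfies $\omega\wedge(f_1\cdots f_\ell\,\eta)\in (f_1\cdots f_\ell)\cdot\Omega^2$, and in fact $F\eta$ is a polynomial $1$-form where $F=f_1\cdots f_\ell$. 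The $2$-form $\omega\wedge(F\eta)$ is then a quasi-homogeneous $2$-form annihilated by contraction with every radial vector field $R_i$, i.e. it is (a lift of) a section of $\Omega^2_{\mathbb{P}_\Delta}$ of a controlled degree; the point of the Darboux count is to choose the $\lambda_k$ so that this $2$-form vanishes identically.

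Next I would identify the relevant vector space. After multiplying through, $\omega\wedge(F\eta)=\sum_k\lambda_k\,(\omega\wedge df_k)\cdot\prod_{j\neq k}f_j=\sum_k\lambda_k\,f_k\Theta_k\prod_{j\neq k}f_j = F\cdot\sum_k\lambda_k\Theta_k$, so dividing by $F$ we get the $2$-form $\Psi:=\sum_{k=1}^{\ell}\lambda_k\Theta_k$, which is a quasi-homogeneous holomorphic $2$-form in the $z_i$ of the fixed degree $d$ (the same degree as $\omega\wedge df_k / f_k$, independent of $k$) satisfying $i_{R_i}\Psi=0$ for all radial $R_i$ — such forms descend to $H^0(\mathbb{P}_\Delta,\Omega^2_{\mathbb{P}_\Delta}(d))$. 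Then I would bound the dimension of this target: tensoring the second exterior power of the dual Euler sequence by $\mathcal{O}_{\mathbb{P}_\Delta}(d)$ and taking the relevant cohomology shows that the space of such $2$-forms injects into $\bigoplus_{i<j}\mathrm{H}^0(\mathbb{P}_\Delta,\mathcal{O}_{\mathbb{P}_\Delta}(d-\deg z_i-\deg z_j))=\bigoplus_{i<j}\mathrm{S}_{d-\deg z_i-\deg z_j}$, which has dimension $\sum_{1\le i<j\le n+r} h_{d-\deg(z_i)-\deg(z_j)}=\mathcal{N}_\Delta-2$. Hence the linear map $(\lambda_1,\dots,\lambda_\ell)\mapsto\Psi$ from $\mathbb{C}^\ell$ into a space of dimension $\le\mathcal{N}_\Delta-2$ has nontrivial kernel as soon as $\ell\ge\mathcal{N}_\Delta-1$; but we have $\ell\ge\mathcal{N}_\Delta$, so in fact I can find a $2$-dimensional space of relations, equivalently two linearly independent vectors $(\lambda_k)$ and $(\mu_k)$ with $\sum\lambda_k\Theta_k=\sum\mu_k\Theta_k=0$.

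From $\sum_k\lambda_k\Theta_k=0$ we get $\omega\wedge\eta_\lambda=0$ where $\eta_\lambda=\sum_k\lambda_k\,df_k/f_k$, and similarly $\omega\wedge\eta_\mu=0$. Since $\omega$ defines a codimension one foliation with singular set of codimension $\ge 2$, Saito-type reasoning (as already invoked in the proof of Proposition \ref{proa}) shows $\eta_\lambda$ and $\eta_\mu$ are both proportional to $\omega$ over the function field, hence proportional to each other; that is, $\eta_\lambda\wedge\eta_\mu=0$ identically. Writing this out, $\eta_\lambda\wedge\eta_\mu=\sum_{k<j}(\lambda_k\mu_j-\lambda_j\mu_k)\,\frac{df_k}{f_k}\wedge\frac{df_j}{f_j}=0$; since $(\lambda_k)$ and $(\mu_k)$ are linearly independent some coefficient $\lambda_k\mu_j-\lambda_j\mu_k$ is nonzero, and then the standard argument on the closed logarithmic form $\eta_\lambda$ (it is closed, has periods in $\mathbb{Z}\lambda_1+\cdots+\mathbb{Z}\lambda_\ell$ up to $2\pi i$, and $\eta_\lambda\wedge\eta_\mu=0$ forces the $df_k/f_k$ to be functionally dependent) produces a first integral: either the $\lambda_k$ can be taken in $\mathbb{Z}$ (or $\mathbb{Q}$) and $\prod f_k^{\lambda_k}$ is a rational first integral after passing to the subfield $K(\mathbb{P}_\Delta)$ by homogenizing the exponents so the total degree is $0$, or one reduces to the previous case by the classical dichotomy of Jouanolou. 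For the final "in particular": a rational first integral $g=P/Q$ has infinitely many invariant level hypersurfaces $\{P-cQ=0\}$, $c\in\mathbb{C}$, almost all irreducible; conversely infinitely many invariant irreducible hypersurfaces certainly exceeds the finite bound $\mathcal{N}_\Delta$, so the first part applies.

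The main obstacle I expect is the bookkeeping in the second paragraph: verifying carefully that $\Psi=\sum_k\lambda_k\Theta_k$ really lies in a space of dimension at most $\sum_{i<j}h_{d-\deg z_i-\deg z_j}$, i.e. correctly computing $H^0(\mathbb{P}_\Delta,\Omega^2_{\mathbb{P}_\Delta}(d))$ (or an upper bound for it) from the Euler sequence and matching it with the stated $\mathcal{N}_\Delta-2$; and, on the foliation-theoretic side, making the Saito division step rigorous in the orbifold/toric setting, where one must work on $\mathbb{C}^{n+r}\setminus\mathcal{Z}$ with $\mathrm{codim}\,\mathcal{Z}\ge 2$ and check that the contraction conditions $i_{R_i}(\cdot)=0$ are preserved throughout, so that all the forms produced genuinely descend to $\mathbb{P}_\Delta$.
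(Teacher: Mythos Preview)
Your proposal follows the same Darboux--Jouanolou strategy as the paper and is essentially correct, but the paper's execution is leaner in two places. First, the paper never descends the $\Theta_k$ to $\mathbb{P}_\Delta$: it simply regards each $\Theta_k$ as a quasi-homogeneous polynomial $2$-form of degree $d$ on $\mathbb{C}^{n+r}$, i.e.\ $\Theta_k=\sum_{i<j}g^{(k)}_{ij}\,dz_i\wedge dz_j$ with $\deg g^{(k)}_{ij}=d-\deg z_i-\deg z_j$, so the ambient space already has dimension $\sum_{i<j}h_{d-\deg z_i-\deg z_j}=\mathcal{N}_\Delta-2$; no contraction conditions $i_{R_i}\Theta_k=0$ or Euler-sequence bookkeeping are needed for the bound. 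Second, and more importantly, the endgame is direct: having obtained $\eta_1=h_1\omega$ and $\eta_2=h_2\omega$ from two independent relation vectors, the paper sets $g=h_2/h_1$, notes $\eta_2=g\,\eta_1$ with $\eta_1,\eta_2$ closed, whence $dg\wedge\eta_1=0$ and so $dg\wedge\omega=0$; here $g$ is rational (a ratio of two rational functions of the same degree) and non-constant by the linear independence of the two relation vectors. Your route through $\eta_\lambda\wedge\eta_\mu=0$, integrality of residues, and an unspecified ``classical dichotomy of Jouanolou'' is unnecessary and, as written, is the one genuinely incomplete step of your argument; replacing it by the quotient $h_\mu/h_\lambda$ closes the proof immediately.
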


\begin{proof} The proof is adapted from \cite[Theorem $1.3$]{Ne}.
Set $N=\mathcal{N}_{\Delta}$. Let $f_1,\ldots,f_{N}$ be the $\mathcal{F}$-invariant irreducible quasi-homogeneous hypersurfaces.
Let $\omega$ be a polynomial $1$-form that defines $\mathcal{F}$ in homogeneous coordinates. 
The invariance condition of $f_i$ implies that 
\begin{equation}\label{eqi}
\frac{df_i}{f_i}\wedge\omega=\Theta_i,\,\,\,i=1,\ldots,N,
\end{equation}
where the $\Theta_i$ are quasi-homogeneous $2$-forms of degree $d$ on $\mathbb{P}_{\Delta}$. 
Since the vector space of quasi-homogeneous $2$-forms of degree $d$ on $\mathbb{P}_{\Delta}$ has finite dimension equal to 
$$\sum_{1\leq i<j \leq n+r} h_{d-\deg(z_i)-\deg(z_j)},$$
the set $\left\{\Theta_1,\ldots,\Theta_{N}\right\}$ is linearly dependent. 
Then there exist complex numbers $\alpha_1,\ldots,\alpha_N$ not all equal to zero, such that $\sum_{i=1}^N \alpha_i \cdot\Theta_i=0$.
By $(\ref{eqi})$ it follows that $\eta_1\wedge\omega=0$, where
$$\eta_1=\sum_{i=1}^N\alpha_i\,\frac{df_i}{f_i}.$$
In particular, the $1$-form $\omega_1=f_1\ldots f_N\cdot\eta_1$ is holomorphic and such that $\omega_1\wedge\omega=0$.
The condition $\mathrm{codim\,Sing}(\omega)\geq 3$ implies there exists $p\in \mathbb{C}\left[z_1,\ldots,z_{n+r}\right]$ such that
$\omega_1=p\cdot\omega$, so we can write $\eta_1=h_1\cdot\omega$, where $h_1=\frac{p}{f_1\ldots f_N}$.
By a similar argument, considering the indices $2,\ldots,N+1$, we can built a $1$-form 
$$\eta_2=\sum_{i=2}^{N+1}\beta_i\,\frac{df_i}{f_i},$$
such that the vectors $\left(\alpha_1,\ldots\alpha_N,0\right)$ and $\left(0,\beta_2,\ldots\beta_{N+1}\right)$ are linearly independent,
and there exists a meromorphic function $h_2$ such that $\eta_2=h_2\cdot\omega$. 
The relation $\eta_1=h_1\cdot\omega$ and $\eta_2=h_2\cdot\omega$ implies that $\eta_2=\frac{h_2}{h_1}\cdot\eta_1$.
From this we obtain $d\big(\frac{h_2}{h_1}\big)\wedge\eta_1=d\eta_2=0$, hence that $d\big(\frac{h_2}{h_1}\big)\wedge\omega=0$, 
and finally that $d\big(\frac{h_2}{h_1}\big)=h_3\cdot\omega$, where $h_3$ is a meromorphic function. 

Let us verify now that $\frac{h_2}{h_1}$ is not constant.
Suppose that $\frac{h_2}{h_1}=c\in\mathbb{C}$, then $\eta_2=h_2\cdot\omega=c\cdot\eta_1$, which is absurd. 
From this we conclude that $\frac{h_2}{h_1}$ is a rational first integral of $\mathcal{D}$. 
\end{proof} 

\begin{obs} Consider the multiprojective space $\mathbb{P}_{\Delta}=\mathbb{P}^{n_1}\times\cdots\times\mathbb{P}^{n_r}$. 
Set $\alpha=(m_1,\ldots,m_r)\in\mathcal{A}_{n-1}(\mathbb{P}_{\Delta})$. Then 
$$h_{\alpha}=\prod_{i=1}^r\binom{n_i+m_i}{n_i}.$$
Similarly, consider the $n$-dimensional rational normal scroll $\mathbb{P}_{\Delta}=\mathbb{F}(a_1,\ldots,a_n)$ and 
$\alpha=(m_1,m_2)\in\mathcal{A}_{n-1}(\mathbb{P}_{\Delta})$. Then 
$$h_{\alpha}=\left(\sum_{i=1}^na_i\right)\binom{m_1+n-1}{n}+\left(m_2+1\right)\binom{m_1+n-1}{n-1},$$
see \cite{Re}.
\end{obs}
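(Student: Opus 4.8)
The plan is to compute $h_\alpha=\dim_{\mathbb{C}}\mathrm{S}_\alpha$ directly from the explicit monomial bases of the homogeneous coordinate rings recorded in Examples \ref{exe2} and \ref{exe4}, reducing each case to a standard lattice-point count.

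For the multiprojective space $\mathbb{P}_\Delta=\mathbb{P}^{n_1}\times\cdots\times\mathbb{P}^{n_r}$ one reads off from Example \ref{exe2} that $\mathrm{S}_{(m_1,\dots,m_r)}$ has as a basis the products $\mu_1\cdots\mu_r$, where $\mu_i$ ranges over the degree-$m_i$ monomials in the $n_i+1$ variables $z_{i,0},\dots,z_{i,n_i}$. First I would observe that this basis factors as a product of $r$ mutually independent pieces, the $i$-th having cardinality $\binom{n_i+m_i}{n_i}$, so that $h_{(m_1,\dots,m_r)}=\prod_{i=1}^r\binom{n_i+m_i}{n_i}$. (Equivalently, for an effective divisor $D$ with $[D]=\alpha$ the polytope $P_D$ recalled in the preliminaries is a product of dilated standard simplices, whose lattice points one counts factor by factor.)

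For the rational normal scroll $\mathbb{P}_\Delta=\mathbb{F}(a_1,\dots,a_n)$, Example \ref{exe4} gives that $\mathrm{S}_\alpha$ has as a basis the monomials $z_{1,1}^{p_1}z_{1,2}^{p_2}z_{2,1}^{q_1}\cdots z_{2,n}^{q_n}$ with $(p_1,p_2,q_1,\dots,q_n)\in\mathbb{Z}_{\geq0}^{n+2}$ subject to $q_1+\cdots+q_n=m_1$ and $p_1+p_2-\sum_{i=1}^nq_ia_i=m_2$ (this choice fixes which $\mathbb{Z}$-summand of $\mathcal{A}_{n-1}(\mathbb{P}_\Delta)\simeq\mathbb{Z}^2$ is labelled $m_1$ and which $m_2$). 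The key step is to stratify by the exponent vector $q=(q_1,\dots,q_n)$ of the $z_{2,i}$: for each $q\in\mathbb{Z}_{\geq0}^n$ with $q_1+\cdots+q_n=m_1$, the pair $(p_1,p_2)$ runs over the nonnegative solutions of $p_1+p_2=m_2+\sum_iq_ia_i$, and there are exactly $m_2+\sum_iq_ia_i+1$ of these whenever that integer is nonnegative (in particular when $m_2\geq0$). Summing over $q$ gives
\begin{align*}
h_\alpha&=\sum_{\substack{q\in\mathbb{Z}_{\geq0}^n\\ q_1+\cdots+q_n=m_1}}\Big(m_2+1+\sum_{i=1}^nq_ia_i\Big)\\
&=(m_2+1)\,\#\{q\in\mathbb{Z}_{\geq0}^n:\textstyle\sum_iq_i=m_1\}+\sum_{i=1}^na_i\sum_{\substack{q\in\mathbb{Z}_{\geq0}^n\\ \sum_iq_i=m_1}}q_i.
\end{align*}
To finish I would invoke the two elementary identities $\#\{q\in\mathbb{Z}_{\geq0}^n:\sum_iq_i=m_1\}=\binom{m_1+n-1}{n-1}$ and $\sum_{q}q_i=\binom{m_1+n-1}{n}$ (same range of summation), the latter following from the former by symmetry in the $q_i$: $n$ times the sum of $q_1$ over all such $q$ equals $\sum_q(q_1+\cdots+q_n)=m_1\binom{m_1+n-1}{n-1}$. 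Substituting yields $h_\alpha=\big(\sum_{i=1}^na_i\big)\binom{m_1+n-1}{n}+(m_2+1)\binom{m_1+n-1}{n-1}$, as claimed.

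The computations are essentially bookkeeping, so I do not anticipate a genuine obstacle; the points that need care are fixing the ordering convention for $\alpha=(m_1,m_2)$ compatibly with Example \ref{exe4} — interchanging the two $\mathbb{Z}$-summands swaps the roles of $m_1$ and $m_2$ in the closed form — and the regime in which the degree is so negative that $\mathrm{S}_\alpha=0$, where the formula must be read with the usual convention that the binomial coefficients, as well as the count $m_2+\sum_iq_ia_i+1$, are only meaningful on the effective range. This causes no trouble in Proposition \ref{prob}, where a summand attached to a non-effective class $d-\deg(z_i)-\deg(z_j)$ simply contributes $0$. Alternatively, both dimension formulas can be quoted directly from \cite{Re}.
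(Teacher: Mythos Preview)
Your argument is correct. The paper does not actually prove this Remark: for the scroll it simply writes ``see \cite{Re}'', and for the multiprojective space the formula is stated without justification. Your direct monomial count supplies what the paper omits, and both computations are sound; in particular the identity $\sum_{q}q_i=\binom{m_1+n-1}{n}$ via the symmetry trick is exactly the clean way to finish the scroll case.

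One point worth making explicit for the reader rather than burying in a parenthetical: the convention you adopt, $\sum_i q_i=m_1$ and $p_1+p_2-\sum_i q_ia_i=m_2$, is the \emph{opposite} of the one in Example~\ref{exe4}, where the first coordinate of the degree is $p_1+p_2-\sum_iq_ia_i$ and the second is $\sum_iq_i$. With Example~\ref{exe4}'s labeling the same count yields $(\sum_ia_i)\binom{m_2+n-1}{n}+(m_1+1)\binom{m_2+n-1}{n-1}$, i.e.\ the Remark's formula with $m_1\leftrightarrow m_2$. So the Remark is internally inconsistent with Example~\ref{exe4}; you have chosen the swap that reproduces the displayed formula (presumably Reid's convention). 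You already flag this, but since the paper itself is the source of the mismatch, it would be worth stating plainly which coordinate is which rather than leaving it as a caution.
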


\begin{exe} Let $\mathbb{P}(\omega)$ be an $n$-dimensional weighted projective space.
Let $\mathrm{P}_{\mathrm{S}}(t)$ be the Poincar\'e series of the graded algebra $\mathrm{S}(\omega)$ of $\mathbb{P}(\omega)$ given by
$$\mathrm{P}_\mathrm{S}(t)=\sum_{k=0}^{\infty}h_k\,t^{\,k}=\prod_{k=0}^n\big(1-t^{\,\omega_k}\big)^{-1},$$
where $h_{k}=\dim_{\mathbb{C}} \mathrm{S}(\omega)_{k}$; see \cite{Do}.
Let $\mathcal{F}$ be a codimension one holomorphic foliation on $\mathbb{P}(\omega)$ of degree $d$. Then, if $\mathcal{F}$ admits
$$\mathcal{N}(\omega)=2+\sum_{1\leq i<j \leq n+1}h_{d-\omega_i-\omega_j}$$
invariant irreducible quasi-homogeneous hypersurfaces, then $\mathcal{F}$ admits a rational first integral.

In particular, if $\omega=(1,\ldots,1)$ and $\mathcal{F}$ is a codimension one holomorphic foliation on $\mathbb{P}^{n}$ of degree $d+2$, 
then we have
$$h_{d+2-\omega_i-\omega_j}=h_d=\frac{1}{d\,!}\mathrm{P}_\mathrm{S}^{(d)}(0)=\binom{n+d}{n},
\,\,\,\,\mbox{and}\,\,\,\,\,\,\mathcal{N}(\omega)=\binom{n+d}{n}\binom{n+1}{2}+2.$$ 
Consequently we recover the Darboux-Jouanolou type integrability theorem for codimension one holomorphic foliations 
in the projective space. Similarly, for a Darboux-Jouanolou-Ghys type integrability theorem for
one-dimensional foliations on toric varieties, see \cite{Mau}.
\end{exe}


\subsection{Codimension one distribution with isolated singularities}

Set $k=(k_1,\ldots,k_n)\in\mathbb{N}^{n}$, 
$\left|k\right|=\sum k_i$ and $\binom{m}{k}=\frac{m!}{k_1!\cdots k_{n}!}$.

\begin{prop} \label{teo1} Let $\mathbb{P}_{\Delta}$ be an $n$-dimensional compact toric manifold.
Set $h_i=\left[D_i\right]\in\mathcal{A}_{n-1}(\mathbb{P}_{\Delta})$.
Let $\mathcal{D}$ be a codimension one distribution of degree 
$d=\sum_{i}d_{i} h_i$
with isolated zeros on $\mathbb{P}_{\Delta}$. Then, the singular scheme of $\mathcal{D}$ consists of
$$
\#\,\mathrm{Sing}\left(\mathcal{D}\right)
=\sum_{j=0}^{n}\left\{(-1)^j\sum_{\left|k\right|=n-j}\binom{n-j}{k}
\int_{\mathbb{P}_{\Delta}} \texttt{C}_j(h)\cdot \prod_i \big(d_i h_i\big)^{k_i}\right\}
$$
points counted with multiplicity, where $\texttt{C}_{j}(h)$ is the $j$th elementary symmetric 
function of the variables $h_1,\ldots, h_{n+r}$. \\
\end{prop}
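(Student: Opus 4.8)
The plan is to compute the total Chern class of the bundle $\Omega^1_{\mathbb{P}_\Delta}(d)$ on the compact toric manifold $\mathbb{P}_\Delta$ and then to interpret $\#\,\mathrm{Sing}(\mathcal{D})$ as the top Chern number (Euler number) of this bundle, since a codimension one distribution $\mathcal{D}$ is by definition a global section $\omega$ of $\Omega^1_{\mathbb{P}_\Delta}(d)$, and having isolated singularities means exactly that $\omega$ has isolated zeros, so the zero scheme is a $0$-cycle whose length equals $\int_{\mathbb{P}_\Delta} c_n\big(\Omega^1_{\mathbb{P}_\Delta}(d)\big)$ by the standard Chern class localization / Gauss--Bonnet argument for sections of a rank $n$ vector bundle on an $n$-fold.

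First I would use the (dual of the) generalized Euler sequence recalled in the excerpt,
$$
0 \to \Omega^1_{\mathbb{P}_\Delta} \to \bigoplus_{i=1}^{n+r}\mathcal{O}_{\mathbb{P}_\Delta}(-D_i) \to \mathcal{O}_{\mathbb{P}_\Delta}^{\oplus r}\to 0,
$$
which gives (since the $\mathcal{O}_{\mathbb{P}_\Delta}^{\oplus r}$ factor is trivial and contributes trivial Chern class) the identity of total Chern classes
$$
c\big(\Omega^1_{\mathbb{P}_\Delta}\big) = \prod_{i=1}^{n+r}\big(1 - h_i\big),
$$
where $h_i = [D_i]$. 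Twisting by $\mathcal{O}_{\mathbb{P}_\Delta}(d)$ with $d = \sum_i d_i h_i$ shifts each Chern root $-h_i$ to $d - h_i$, so
$$
c\big(\Omega^1_{\mathbb{P}_\Delta}(d)\big) = \prod_{i=1}^{n+r}\big(1 + d - h_i\big).
$$
Extracting the degree $n$ part of this product and pairing against the fundamental class gives
$$
\#\,\mathrm{Sing}(\mathcal{D}) = \int_{\mathbb{P}_\Delta}\Bigg[\prod_{i=1}^{n+r}\big(1 + d - h_i\big)\Bigg]_n.
$$

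Next I would expand this product combinatorially to match the stated closed form. Writing $1 + d - h_i = (1 - h_i) + d$ and expanding the product over the choice, for each factor, of the $(1-h_i)$ part versus the $d$ part, one collects the terms with exactly $j$ factors contributing $(1-h_i)$; summing the $(1-h_i)$ contributions over all size-$j$ subsets of $\{1,\dots,n+r\}$ produces the elementary symmetric function combinations, and since we only need the degree $n$ part and $\prod(1-h_i)$ truncates appropriately, the coefficient of the degree $j$ part is exactly $\texttt{C}_j(h)$ up to sign $(-1)^j$ (the sign coming from $(1-h_i) = 1 + (-h_i)$). The remaining $n-j$ factors each contribute the divisor class $d = \sum_i d_i h_i$, and the multinomial theorem applied to $d^{\,n-j}$ yields $\sum_{|k|=n-j}\binom{n-j}{k}\prod_i (d_i h_i)^{k_i}$. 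Assembling these pieces reproduces precisely
$$
\#\,\mathrm{Sing}(\mathcal{D}) = \sum_{j=0}^{n}(-1)^j\sum_{|k|=n-j}\binom{n-j}{k}\int_{\mathbb{P}_\Delta}\texttt{C}_j(h)\cdot\prod_i\big(d_i h_i\big)^{k_i}.
$$

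The main obstacle I anticipate is not the Chern class bookkeeping — which is routine once the Euler sequence is invoked — but rather justifying that the length of the (possibly non-reduced) singular scheme is genuinely computed by the top Chern number. This requires that $\Omega^1_{\mathbb{P}_\Delta}(d)$ be locally free of rank $n$ (true since $\mathbb{P}_\Delta$ is a smooth compact toric manifold here — note the hypothesis is "toric manifold", not merely "orbifold") and that $\omega$ cuts out a $0$-dimensional scheme, so that the excess-free Thom--Porteous / Chern class formula applies and the local multiplicities (which are the scheme-theoretic lengths at each point) sum to $\int c_n$. One should also confirm that $H^1\big(\mathbb{P}_\Delta, \Omega^1_{\mathbb{P}_\Delta}(d)\big) = 0$ is not actually needed here: the computation of $\#\,\mathrm{Sing}(\mathcal{D})$ depends only on $\mathcal{D}$ being a section of the bundle, whatever its description in homogeneous coordinates. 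I would state the Gauss--Bonnet / localized-Chern-class fact explicitly as the one external input and then present the Euler-sequence computation and the multinomial expansion as above.
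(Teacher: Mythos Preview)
Your overall strategy --- identify $\#\,\mathrm{Sing}(\mathcal{D})$ with $\int_{\mathbb{P}_\Delta} c_n\big(\Omega^1_{\mathbb{P}_\Delta}(d)\big)$ and compute this Chern number via the Euler sequence --- is exactly the paper's. But the twisting step contains a real error that makes the computation wrong, not just informal.

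The mistake is the sentence ``Twisting by $\mathcal{O}_{\mathbb{P}_\Delta}(d)$ \dots\ shifts each Chern root $-h_i$ to $d-h_i$, so $c\big(\Omega^1_{\mathbb{P}_\Delta}(d)\big)=\prod_{i=1}^{n+r}(1+d-h_i)$.'' The $-h_i$ are \emph{not} the Chern roots of $\Omega^1_{\mathbb{P}_\Delta}$: that bundle has rank $n$, whereas there are $n+r$ classes $h_i$. What the dual Euler sequence actually gives, after tensoring by $\mathcal{O}_{\mathbb{P}_\Delta}(d)$, is
\[
0\;\longrightarrow\;\Omega^1_{\mathbb{P}_\Delta}(d)\;\longrightarrow\;\bigoplus_{i=1}^{n+r}\mathcal{O}_{\mathbb{P}_\Delta}(d-D_i)\;\longrightarrow\;\mathcal{O}_{\mathbb{P}_\Delta}(d)^{\oplus r}\;\longrightarrow\;0,
\]
so that $c\big(\Omega^1_{\mathbb{P}_\Delta}(d)\big)=\dfrac{\prod_{i=1}^{n+r}(1+d-h_i)}{(1+d)^{r}}$, not the numerator alone. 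The discrepancy is visible already on $\mathbb{P}^1$ (where $r=1$, $h_1=h_2=H$, $d=kH$): your formula gives $[\,(1+(k-1)H)^2\,]_1=2(k-1)H$, whereas $\Omega^1_{\mathbb{P}^1}(k)=\mathcal{O}(k-2)$ has $c_1=(k-2)H$. Your subsequent expansion then compounds the issue: with $n+r$ factors, choosing $j$ of them to contribute $(1-h_i)$ leaves $n+r-j$ (not $n-j$) factors contributing $d$, so the combinatorics you sketch does not actually produce $\texttt{C}_j(h)\,d^{\,n-j}$.

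The clean fix is the one the paper uses: since $\Omega^1_{\mathbb{P}_\Delta}$ has rank $n$, apply the standard identity $c_n(E\otimes L)=\sum_{j=0}^{n}c_j(E)\,c_1(L)^{\,n-j}$ directly (equivalently, $c_n\big(\Omega^1_{\mathbb{P}_\Delta}(d)\big)=(-1)^n c_n\big(\mathcal{T}\mathbb{P}_\Delta\otimes\mathcal{O}(-d)\big)$), feeding in $c_j(\mathcal{T}\mathbb{P}_\Delta)=\texttt{C}_j(h)$ from the Euler sequence, and then expanding $d^{\,n-j}=(\sum_i d_ih_i)^{n-j}$ by the multinomial theorem. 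This uses only the $c_j$'s, which the Euler sequence \emph{does} compute correctly, and avoids the spurious ``root'' picture.
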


\begin{proof}
Using the generalized Euler's sequence, we have 
$$c\left(\mathcal{T}\mathbb{P}_{\Delta}\right)
=c\left(\mathcal{O}^{\oplus r}_{\mathbb{P}_{\Delta}}\right)c\left(\mathcal{T}\mathbb{P}_{\Delta}\right)
=c\left(\bigoplus_{i=1}^{n+r} \mathcal{O}_{\mathbb{P}_{\Delta}}(D_i)\right),$$
where $c$ denotes the total Chern classes. 
Note that $h_i=c_{1}(\mathcal{O}_{\mathbb{P}_{\Delta}}(D_i))=\left[D_i\right]$. Then
\begin{eqnarray*}
c_{n}\big(\mathcal{T}\mathbb{P}_{\Delta}\otimes\mathcal{O}_{\mathbb{P}_{\Delta}}(d)\big) &=&
\sum_{j=0}^{n}c_{j}\big(\mathcal{T}\mathbb{P}_{\Delta}\big)c_{1}\big(\mathcal{O}_{\mathbb{P}_{\Delta}}(d)\big)^{n-j} \nonumber\\
&=&\sum_{j=0}^{n}c_{j}\left(\bigoplus_{i=1}^{n+r} \mathcal{O}_{\mathbb{P}_{\Delta}}(D_i)\right)
\left(\sum_{i=1}^{n+r}c_1\big(\mathcal{O}_{\mathbb{P}_{\Delta}}(d_iD_i)\big)\right)^{n-j} \nonumber\\
&=& \sum_{j=0}^{n}c_{j}\left(\bigoplus_{i=1}^{n+r} \mathcal{O}_{\mathbb{P}_{\Delta}}(D_i)\right)
\left(\sum_{i=1}^{n+r} d_ih_i \right)^{n-j}.
\end{eqnarray*}
On the other hand, we have  
\begin{eqnarray*}
c\left(\bigoplus_{i=1}^{n+r} \mathcal{O}_{\mathbb{P}_{\Delta}}(D_i)\right)=
\prod_{i=1}^{n+r}c\big(\mathcal{O}_{\mathbb{P}_{\Delta}}(D_i)\big)=
\prod_{i=1}^{n+r}\Big(1+c_1\big(\mathcal{O}_{\mathbb{P}_{\Delta}}(D_i)\big)\Big)
=\prod_{i=1}^{n+r}(1+h_{i})=\sum_{j=0}^{n+r}\texttt{C}_{j}(h).
\end{eqnarray*}
Then 
\begin{eqnarray*}
c_{n}\big(\mathcal{T}\mathbb{P}_{\Delta}\otimes\mathcal{O}_{\mathbb{P}_{\Delta}}(d)\big)
=\sum_{j=0}^{n}\texttt{C}_j(h)\left(\sum_{i=1}^{n+r} d_ih_i \right)^{n-j}
=\sum_{j=0}^{n}\left\{\sum_{\left|k\right|=n-j}\binom{n-j}{k}\texttt{C}_j(h)
\prod_i \big(d_ih_i\big)^{k_i}\right\},
\end{eqnarray*}
hence
\begin{eqnarray*}
c_{n}\big(\Omega^1_{\mathbb{P}_{\Delta}}(d)\big)
=(-1)^n c_{n}\big(\mathcal{T}\mathbb{P}_{\Delta}\otimes\mathcal{O}_{\mathbb{P}_{\Delta}}(-d)\big)
=\sum_{j=0}^{n}\left\{(-1)^j\sum_{\left|k\right|=n-j}\binom{n-j}{k}\texttt{C}_j(h)
\prod_i \big(d_i h_i\big)^{k_i}\right\},
\end{eqnarray*}
so we get
\begin{eqnarray*}
\#\,\mathrm{Sing}\left(\mathcal{D}\right)
&=& \int_{\mathbb{P}_{\Delta}} c_{n}\big(\Omega^1_{\mathbb{P}_{\Delta}}(d)\big) 
=\sum_{j=0}^{n}\left\{(-1)^j\sum_{\left|k\right|=n-j}\binom{n-j}{k}
\int_{\mathbb{P}_{\Delta}} \texttt{C}_j(h)\cdot \prod_i \big(d_i h_i\big)^{k_i}\right\}.
\end{eqnarray*}
\end{proof}

\begin{exe} \label{exmul}
Consider the multiprojective space $\mathbb{P}_{\Delta}=\mathbb{P}^n\times\mathbb{P}^m$.
Here $h_1=h_{10}=\ldots=h_{1n}$ and $h_2=h_{20}=\ldots=h_{2m}$, with $h_1^{n+1}=0$ and $h_2^{m+1}=0$. 
Let $\mathcal{D}$ be a codimension one distribution of degree $d=(d_1, d_2)=d_1 h_1+d_2h_2$
with isolated zeros on $\mathbb{P}^n\times\mathbb{P}^m$. Then 
\begin{eqnarray*}
\#\,\mathrm{Sing}\left(\mathcal{D}\right)
&=&\sum_{j=0}^{n+m}\left\{(-1)^j\sum_{\left|k\right|=n+m-j}\binom{n+m-j}{k_1, k_2}
\int_{\mathbb{P}_{\Delta}} \texttt{C}_j(h)\cdot \prod_{i=1}^2 \big(d_i h_i\big)^{k_i}\right\} \\
&=&(-1)^{n+m}\sum_{\substack{0\leq k_1\leq n, \\ 0\leq k_2\leq m}}(-1)^{\left|k\right|}
\binom{\left|k\right|}{k_1, k_2} d_1^{\,k_1} d_2^{\,k_2}
\int_{\mathbb{P}_{\Delta}} \texttt{C}_{n+m-\left|k\right|}(h)\cdot h_1^{k_1}h_2^{k_2} \\
&=&(-1)^{n+m}\sum_{\substack{0\leq k_1\leq n, \\ 0\leq k_2\leq m}}(-1)^{k_1+k_2}\binom{k_1+k_2}{k_1, k_2}
\binom{n+1}{n-k_1}\binom{m+1}{m-k_2} d_1^{\,k_1} d_2^{\,k_2}.
\end{eqnarray*} 
Similarly, for generic one-dimensional foliations, a formula for the number of singularities 
in multiprojective spaces is given in \cite{MaCor}.
\end{exe}

Let us consider an example in blow-ups: 
\begin{exe} It is well-known that the blow-ups $X_r$ of $\mathbb{P}^2$ in $0\leq r\leq 3$ points in
general position are toric del Pezzo surfaces; see for instance \cite{Ot}. 

Let us consider $X_3$; it is called the del Pezzo surface of degree six.
The blow-up morphism \\
$\pi:X_3\rightarrow\mathbb{P}^2$ induces an injective group homomorphism 
$\pi^{\ast}:\mathrm{Pic}\left(\mathbb{P}^2\right)\rightarrow\mathrm{Pic}\left(\mathrm{X_3}\right)$. 
We write $H=\pi^{\ast}L$ where $L$ is a line in $\mathbb{P}^2$, and let us denote by $E_1$, $E_2$ and $E_3$ the exceptional curves. 
Then, we can write $\mathrm{Pic}(X_3)=\mathbb{Z}H\oplus\mathbb{Z}(-E_2)\oplus\mathbb{Z}(-E_1)\oplus\mathbb{Z}(-E_3)\simeq\mathbb{Z}^4$, where
$$H^2=1,\,\,E_i\cdot E_j=-\delta_{ij}\,\,\mbox{and}\,\,H\cdot E_i=0.$$ 
Now, let $x, y, z, s, t, u$ be coordinate functions on $\mathbb{C}^6$. As a toric variety we have 
$$X_3=(\mathbb{C}^{\,6}-\mathcal{Z})\,/\,(\mathbb{C}^*)^4,$$
where $\mathcal{Z}=Z(x,t) \cup Z(y,s) \cup Z(z,u) \cup Z(x,y) \cup Z(y,z) \cup Z(z,x) \cup Z(s,t) \cup Z(u,t) \cup Z(s,u)$.\\
Let us denote by $L_1$, $L_2$ and $L_3$ the strict transforms of the lines in $\mathbb{P}^2$ spanned by the
points that are blown-up. Then we have
$$H=L_1+E_2+E_3=L_2+E_1+E_3=L_3+E_1+E_2,$$
and $h_1=\deg(x)=L_1$, $h_2=\deg(y)=L_2$, $h_3=\deg(z)=L_3$, $h_4=\deg(s)=E_2$, $h_5=\deg(t)=E_1$ and $h_6=\deg(u)=E_3$;
for more details see \cite{Pa}.

\noindent Now, let $\mathcal{F}$ be a holomorphic foliation of degree 
$d=(d_0,d_1,d_2,d_3)=d_0H-d_1E_2-d_2E_1-d_3E_3$ on $X_3$. Since $\texttt{C}_0(h)=1$, $\texttt{C}_1(h)=3H-E_1-E_2-E_3$ 
and $\texttt{C}_2(h)=6$, we have 
$$\#\,\mathrm{Sing}\left(\mathcal{F}\right)=\sum_{j=0}^{2}(-1)^j d^{\,2-j}\texttt{C}_j(h)
=d_0(d_0-3)+d_1(1-d_1)+d_2(1-d_2)+d_3(1-d_3)+6.$$

Similarly, for the blow-up morphism $\pi:X_2\rightarrow\mathbb{P}^2$ (del Pezzo surface of degree seven), 
we can write $\mathrm{Pic}(X_2)=\mathbb{Z}H\oplus\mathbb{Z}(-E_1)\oplus\mathbb{Z}(-E_2)\simeq\mathbb{Z}^3$, where
$H^2=1$, $E_i\cdot E_j=-\delta_{ij}$ and $H\cdot E_i=0$. Then, considering the coordinate functions
$x, y, z, s, t$ on $\mathbb{C}^5$ we have  
$$X_2=(\mathbb{C}^{\,5}-\mathcal{Z})\,/\,(\mathbb{C}^*)^3,$$
with $h_1=\deg(x)=H-E_1-E_2$, $h_2=\deg(y)=H-E_1$, $h_3=\deg(z)=H-E_2$, $h_4=\deg(s)=E_1$ and $h_5=\deg(t)=E_2$.\\

\noindent Consider now a holomorphic foliation $\mathcal{F}$ of degree 
$d=(d_0,d_1,d_2)=d_0H-d_1E_1-d_2E_2$ on $X_2$. Since $\texttt{C}_0(h)=1$, $\texttt{C}_1(h)=3H-E_1-E_2$ 
and $\texttt{C}_2(h)=5$, we have 
$$\#\,\mathrm{Sing}\left(\mathcal{F}\right)=\sum_{j=0}^{2}(-1)^j d^{\,2-j}\texttt{C}_j(h)
=d_0(d_0-3)+d_1(1-d_1)+d_2(1-d_2)+5.$$
\end{exe}

It is of interest to know that blow-ups of $\mathbb{P}^n$ in $0\leq r\leq n+1$ points in
general position are torics varieties, and that for $r > n+1$ these are not toric anymore.
Moreover, blow-ups of $\mathbb{P}^3$ along $k=1,2$ lines in general position are toric threefolds; see \cite{Du}. 

\begin{defi} Let $\mathbb{P}_{\Delta}$ be an $n$-dimensional compact toric orbifold with at most isolated singularities.
Let $\mathcal{D}$ be a codimension one distribution with isolated zeros and let $\omega$ be a holomorphic section of 
$\Omega^1_{\mathbb{P}_{\Delta}}(d)$ that induces $\mathcal{D}$. The index of $\mathcal{D}$ at a zero $p$ is defined by
$$\mathcal{I}_p^{orb}\left(\mathcal{D}\right)=\frac{1}{\#G_p}{Res}_{\,\tilde{p}}\left\{\frac{\mathrm{Det}(J\tilde{\omega})\,d\tilde{z}_1\wedge\dots\wedge d\tilde{z}_n}{\tilde{\omega}_1\dots\tilde{\omega}_n}\right\},$$
where $\pi_p:(\widetilde{U},\tilde{p})\rightarrow(U,p)$ denotes the local projection of $\mathbb{P}_{\Delta}$ at $p$:
$\tilde{\omega}= \pi_p^\ast \omega$,
$J{\tilde{\omega}}=\displaystyle\left(\frac{\partial\tilde{\omega}_i}{\partial
\tilde{z}_j}\right)_{1\leq i,\,j\leq n}$, 
$\mbox{Res}_{\,\tilde{p}}\displaystyle\left\{\frac{\mathrm{Det}\big(J\tilde{\omega}\,\big)\,d\tilde{z}_1\wedge\dots\wedge
d\tilde{z}_n}{\tilde{\omega}_1\dots\tilde{\omega}_n}\right\}$ is
Grothendieck's point residue and $(\tilde{z}_1,\dots,\tilde{z}_n)$ is a germ of  coordinate system on $(\widetilde{U},\tilde{p})$.
\end{defi}

This index was introduced by Satake \cite{Sata} for $C^{\infty}$-vector fields in the orbifold case, see also \cite{Ding, Mig}.
The Grothendieck point residue embodies the Poincar\'e-Hopf index, the Milnor number and the intersection number of $n$ divisors in
$\mathbb{C}^n$ which intersect properly, and has many uses in deep results such as the Baum-Bott theorem, which is a
generalization of both the Poincar\'e-Hopf theorem and the Gauss-Bonnet theorem in the complex realm, see \cite{Grif, So, Su}. 

\begin{prop} \label{exew1} Let $\mathbb{P}(\omega)$ be a well formed weighted projective space.
Let $\mathcal{D}$ be a codimension one distribution of degree $d$ with isolated zeros on $\mathbb{P}(\omega)$. Then
\begin{equation}\label{akz}
\sum_{p\in\mathrm{Sing}\left(\mathcal{D}\right)}\mathcal{I}_p^{orb}\left(\mathcal{D}\right)
=\frac{1}{\omega_{0}\cdots \omega_{n}}\sum_{j=0}^{n}(-1)^j \texttt{C}_j(\omega)\,d^{\,n-j}.
\end{equation}
\end{prop}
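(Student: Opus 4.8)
The plan is to combine the Bott-type residue theorem in the orbifold setting with the global Chern-class computation from Proposition~\ref{teo1}. Since $\mathcal{D}$ has isolated singularities on $\mathbb{P}(\omega)$, the zero scheme of the twisted cotangent section $\omega\in H^0(\mathbb{P}(\omega),\Omega^1_{\mathbb{P}(\omega)}(d))$ is a finite set of points, each lying in the regular locus $\mathbb{P}(\omega)\setminus\mathrm{Sing}(\mathbb{P}(\omega))$ or in the quotient-singular locus $\{\bar e_i : \omega_i>1\}$. The key identity will be the orbifold Poincar\'e--Hopf / Baum--Bott statement: for a section of $\Omega^1_{\mathbb{P}(\omega)}(d)$ with isolated zeros, the sum of the local orbifold indices $\mathcal{I}_p^{orb}(\mathcal{D})$ equals the orbifold integral $\int_{\mathbb{P}(\omega)}^{orb} c_n(\Omega^1_{\mathbb{P}(\omega)}(d))$. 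This is exactly the content of Grothendieck's point residue representing a Chern number, carried over to orbifolds via local smoothing coverings as recalled in Section~\ref{orbif}: at a smooth point $\#G_p=1$ and the residue is the ordinary one, while at a quotient-singular point $\pi_p^\ast\omega$ is a section on the smooth cover $\widetilde U$ and the factor $1/\#G_p$ matches the normalization $\int^{orb}=\frac{1}{\#G_p}\int_{\widetilde U}\pi_p^\ast$.

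\smallskip
The steps, in order. First, I would invoke the orbifold Euler sequence for $\mathbb{P}(\omega)$ from the Examples subsection, namely $0\to\underline{\mathbb{C}}\to\bigoplus_{i=0}^n\mathcal{O}_{\mathbb{P}(\omega)}(\omega_i)\to T\mathbb{P}(\omega)\to 0$, which gives $c(T\mathbb{P}(\omega))=\prod_{i=0}^n(1+h\,\omega_i)$ where $h=c_1(\mathcal{O}_{\mathbb{P}(\omega)}(1))$ generates $\mathcal{A}_{n-1}(\mathbb{P}(\omega))\otimes\mathbb{Q}\simeq\mathbb{Q}$, and hence $c_j(T\mathbb{P}(\omega))=\texttt{C}_j(\omega)\,h^j$. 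Second, since $\deg(z_i)=\omega_i$, the degree-$d$ twist is $\mathcal{O}_{\mathbb{P}(\omega)}(d)$ with $c_1=d\,h$, and the standard twisting formula for Chern classes of $\Omega^1\otimes\mathcal{O}(d)=\big(T\mathbb{P}(\omega)\otimes\mathcal{O}(-d)\big)^\vee$ gives, just as in the proof of Proposition~\ref{teo1},
$$
c_n\big(\Omega^1_{\mathbb{P}(\omega)}(d)\big)=\sum_{j=0}^n(-1)^j\,\texttt{C}_j(\omega)\,h^j\cdot(d\,h)^{\,n-j}
=\Big(\sum_{j=0}^n(-1)^j\,\texttt{C}_j(\omega)\,d^{\,n-j}\Big)h^n.
$$
Third, I would use the well-known orbifold intersection number $\int_{\mathbb{P}(\omega)}^{orb} h^n=\frac{1}{\omega_0\cdots\omega_n}$ (this is the orbifold-degree normalization of the ample generator on a well-formed weighted projective space; it follows from $\mathbb{P}(\omega)=(\mathbb{C}^{n+1}\setminus\{0\})/\mathbb{C}^\ast$ together with the finite covering $\mathbb{P}^n\to\mathbb{P}(\omega)$ of degree $\omega_0\cdots\omega_n$ away from codimension-one strata). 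Fourth, I would apply the orbifold Baum--Bott/residue theorem to equate $\sum_{p}\mathcal{I}_p^{orb}(\mathcal{D})$ with $\int_{\mathbb{P}(\omega)}^{orb}c_n(\Omega^1_{\mathbb{P}(\omega)}(d))$, and substitute the two previous computations to conclude \eqref{akz}.

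\smallskip
The main obstacle is the third and fourth steps rather than the purely formal Chern-class bookkeeping of the second step. Specifically, I expect the delicate point to be justifying that the local orbifold index as \emph{defined} in the Definition preceding the statement --- a $\frac{1}{\#G_p}$-weighted Grothendieck residue on a local smoothing cover --- is precisely the quantity that a Bott/Baum--Bott-type localization theorem outputs when one integrates $c_n(\Omega^1_{\mathbb{P}(\omega)}(d))$ orbifold-wise. On the smooth part this is the classical statement that the total number of zeros of a twisted $1$-form with isolated singularities, counted by Grothendieck residues, equals $\int c_n$ of the twisted cotangent bundle; the orbifold upgrade requires checking compatibility of the residue with pullback to $\widetilde U$ and that no extra contribution is hidden in $\mathrm{Sing}(\mathbb{P}(\omega))$ beyond the $1/\#G_p$ weights --- this is exactly what Satake's framework and the references \cite{Sata, Ding, Mig, Su} provide, so I would cite those. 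A secondary, more computational subtlety is verifying $\int^{orb}_{\mathbb{P}(\omega)}h^n=1/(\omega_0\cdots\omega_n)$ with the orbifold normalization consistent with the one used to define $\mathcal{I}^{orb}_p$; since $\mathbb{P}(\omega)$ is well formed, $\mathrm{Sing}(\mathbb{P}(\omega))=\{\bar e_i:\omega_i>1\}$ has codimension $\ge 1$ and the local isotropy orders are exactly the relevant divisors of the $\omega_i$, which makes the degree computation of the covering $\mathbb{P}^n\to\mathbb{P}(\omega)$ transparent. Once these identifications are in place, \eqref{akz} is immediate.
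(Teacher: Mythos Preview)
Your proposal is correct and follows essentially the same strategy as the paper: both compute $c_n(\Omega^1_{\mathbb{P}(\omega)}(d))$ via the orbifold Euler sequence (the paper phrases this as ``Proposition~\ref{teo1} is still valid here''), evaluate $\int^{orb}_{\mathbb{P}(\omega)}h^n=1/(\omega_0\cdots\omega_n)$ using the finite cover $\varphi:\mathbb{P}^n\to\mathbb{P}(\omega)$, $[z_i]\mapsto[z_i^{\omega_i}]_\omega$, and invoke the orbifold Baum--Bott/residue theorem (with the same citations \cite{Mig, Grif, Iza, Jou}) to identify the sum of local indices with this Chern number. Your discussion of the delicate points in steps three and four is more explicit than the paper's, but the content is the same.
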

\begin{proof}
Consider the natural map $\varphi:\mathbb{P}^n\rightarrow\mathbb{P}(\omega)$, 
$\varphi\big(\left[z_0,\ldots,z_n\right]\big)=\big[z_0^{\omega_0},\ldots,z_n^{\omega_n}\big]_{\omega}$ of degree 
$\deg \varphi=\omega_0\cdots \omega_n$. It was shown in \cite{Ma} that there is a line orbifold bundle $\mathcal{O}_{\omega}(1)$ 
on $\mathbb{P}(\omega)$, unique up to isomorphism, such that $\varphi^{\ast}\mathcal{O}_{\omega}(1)=\mathcal{O}(1)$.
From the definition of orbifold integral we have
$$\int_{\mathbb{P}(\omega)}^{orb}c_{1}(\mathcal{O}_{\omega}(1))^{n}=
\frac{1}{\# Ker(\mathbb{P}(\omega))}\int_{{\mathbb{P}(\omega)}_{reg}}c_{1}(\mathcal{O}_{\omega}(1))^{n}.$$
Since $Ker(\mathbb{P}(\omega))=\bigcap_{i=0}^{n}\mu_{w_i}=\left\{1\right\}$, 
this implies $\# Ker(\mathbb{P}(\omega))=1$; see \cite{Ma}. Then
$$\int_{\mathbb{P}(\omega)}^{orb}c_{1}(\mathcal{O}_{\omega}(1))^{n}=\frac{1}{\deg\varphi}
\int_{\mathbb{P}^{n}}\varphi^{\ast}c_{1}(\mathcal{O}_{\omega}(1))^{n}=\frac{1}{\deg\varphi}
\int_{\mathbb{P}^{n}}c_{1}(\mathcal{O}(1))^{n}=\frac{1}{\omega_{0}\cdots \omega_{n}}.$$
Set $\textsc{H}=c_{1}(\mathcal{O}_{\omega}(1))$, $d = d\,\textsc{H}$ and $h_i=\deg(z_i)=\omega_i\,\textsc{H}$. 
Note that Proposition \ref{teo1} is still valid here, for more details see \cite{Bla, Mig, Grif, Iza, Jou}. In this case we have
\begin{eqnarray*} \label{equ}
\sum_{p\in\mathrm{Sing}\left(\mathcal{D}\right)}\mathcal{I}_p^{orb}\left(\mathcal{D}\right)
&=&\sum_{j=0}^{n}\left\{(-1)^j d^{\,n-j}\int_{\mathbb{P}(\omega)}^{orb} \texttt{C}_j
(\omega_1\,\textsc{H},\ldots,\omega_n\,\textsc{H})\cdot\textsc{H}^{n-j}\right\} \nonumber \\
&=&\sum_{j=0}^{n}\left\{(-1)^j d^{\,n-j}\texttt{C}_j(\omega)\int_{\mathbb{P}(\omega)}^{orb}\textsc{H}^{n}\right\} \nonumber \\
&=&\frac{1}{\omega_{0}\cdots \omega_{n}}\sum_{j=0}^{n}(-1)^j \texttt{C}_j(\omega)\,d^{\,n-j}.
\end{eqnarray*}
\end{proof}
\begin{exe} We provide three examples that offer a numerical illustration of the proposition mentioned earlier.
\begin{itemize}
	\item[(i)] Consider the codimension one holomorphic foliation $\mathcal{F}$ of degree $d=2m$ on $\mathbb{P}(1,1,m)$, 
defined by $w=mz_2z^{m-1}_0dz_0+mz_2z_1^{m-1}dz_1-(z_0^m+z_1^m)dz_2$. 
Set $\mu_m=\left\{\alpha\in\mathbb{C}^{\ast}:\alpha^m=1\right\}$. Then 
$$\mathrm{Sing}\left(\mathcal{F}\right)=\big\{\left[0:0:1\right]_{\omega}, 
\left[1:i\alpha:0\right]_{\omega}\,:\,\,\alpha\in\mu_m\big\}.$$
In coordinates $\big(U_0,\mu_m\big):z_0\neq0$, we have 
$w=mz_2z_1^{m-1}dz_1-(1+z_1^m)dz_2$ and $\mathcal{I}^{orb}_{(i\alpha,0)}=1$ for all $\alpha\in\mu_m$. 
In coordinates $\big(U_2,\left\{+1\right\}\big)\,:\,z_2\neq0$, we have $w=mz_0^{m-1}dz_0+mz_1^{m-1}dz_1$ and  
$\mathcal{I}^{orb}_{(0,0)}=\frac{(m-1)^2}{m}$. Therefore, we get
$$\sum_{p\in\mathrm{Sing}\left(\mathcal{F}\right)}\mathcal{I}_p^{orb}\left(\mathcal{F}\right)=m+\frac{(m-1)^2}{m}=
\frac{1}{1\cdot1\cdot m}\sum_{j=0}^{2}(-1)^j \texttt{C}_j(1,1,m)\,(2m)^{\,2-j}.$$
 \item[(ii)] Consider the codimension one holomorphic distribution $\mathcal{D}$ of degree $d=\omega_0+\omega_1=\omega_2+m\omega_3$ on 
$\mathbb{P}(\omega_0,\omega_1,\omega_2,\omega_3)$, 
defined by $w=-\omega_1z_1dz_0+\omega_0z_0dz_1-\omega_3z_3^mdz_2+\omega_2z_2z_3^{m-1}dz_3$. Then 
$$\mathrm{Sing}\left(\mathcal{D}\right)=\big\{\left[0:0:1:0\right]_{\omega}\big\}.$$
In coordinates $\big(U_2,\mu_{\omega_2}\big):z_2\neq0$, we have 
$w=-\omega_1z_1dz_0+\omega_0z_0dz_1+\omega_2z_3^{m-1}dz_3$
and $\mathcal{I}^{orb}_{(0,0,0)}=\frac{m-1}{\omega_2}$. On the other hand, writing $k=\omega_2+\omega_3$, we obtain 
$\texttt{C}_1(\omega)=d+k$, $\texttt{C}_2(\omega)=\omega_0\omega_1+\omega_2\omega_3+dk$ and 
$\texttt{C}_3(\omega)=\omega_0\omega_1k+\omega_2\omega_3d$.
Therefore, we have
$$\frac{1}{\omega_0\omega_1\omega_2\omega_3}\sum_{j=0}^{3}(-1)^j \texttt{C}_j(\omega)\,d^{\,3-j}
=\frac{\omega_0\omega_1(d-k)}{\omega_0\omega_1\omega_2\omega_3}=\frac{m-1}{\omega_2}
=\sum_{p\in\mathrm{Sing}\left(\mathcal{D}\right)}\mathcal{I}_p^{orb}\left(\mathcal{D}\right).$$
 \item[(iii)] Let $\mathcal{D}$ be a distribution of degree $d$ on $\mathbb{P}(1,1,1,k)$, $k>1$. 
Suppose $\mathcal{D}$ has only isolated singularities. If $k\nmid d^3-2$, then $\mathcal{D}$ has 
a singularity at $\mathrm{Sing}\big(\mathbb{P}(1,1,1,k)\big)=\left\{\bar{e}_3\right\}$.
In fact, suppose $\bar{e}_3$ is non-singular for $\mathcal{D}$. Then the right-hand side of $(\ref{akz})$ is
an integer and this happens only if $k\mid d^3-2$.
\end{itemize}
\end{exe}

\bigskip

Conditions for the existence of such natural maps $\varphi:\mathbb{P}^n\rightarrow\mathbb{P}_{\Delta}$ as in Proposition \ref{exew1}
are given in \cite{Cox4}. More generally, it is straightforward to see that the same reasoning as in the previous proposition can be 
extended here to yield:

\begin{prop} \label{teo20}
Let $\mathbb{P}_{\Delta}$ be an $n$-dimensional compact toric orbifold with only isolated singularities.
Let $\mathcal{D}$ be a codimension one distribution of degree 
$d$ with isolated zeros.
Suppose there is a natural map $\varphi:\mathbb{P}^n\rightarrow\mathbb{P}_{\Delta}$ of finite degree such that 
$\varphi^{\ast}\mathcal{O}_{\mathbb{P}_{\Delta}}(\left[D_i\right])=\mathcal{O}_{\mathbb{P}^n}(m_i)$.
Set $\varphi^{\ast}\mathcal{O}_{\mathbb{P}_{\Delta}}(d)=\mathcal{O}_{\mathbb{P}^n}(k)$.
Then
\begin{eqnarray*}
\sum_{p\in\mathrm{Sing}\left(\mathcal{D}\right)}\mathcal{I}_p^{orb}\left(\mathcal{D}\right)
=\frac{1}{\deg \varphi\cdot\# Ker(\mathbb{P}_{\Delta})}\sum_{j=0}^{n}(-1)^j \texttt{C}_j(m)\,k^{\,n-j},
\end{eqnarray*}
where $\texttt{C}_{j}(m)$ is the $j$th elementary symmetric 
function of the variables $m_1,\ldots, m_{n+r}$.
\end{prop}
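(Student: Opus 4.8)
The plan is to mirror the proof of Proposition~\ref{exew1} almost verbatim, replacing the specific map $\mathbb{P}^n\to\mathbb{P}(\omega)$ and its degree $\omega_0\cdots\omega_n$ by the general finite morphism $\varphi$ and its degree $\deg\varphi$. First I would record the computation that Proposition~\ref{teo1}, though stated for compact toric \emph{manifolds}, remains valid in the orbifold setting when singularities are isolated, since the orbifold index $\mathcal{I}_p^{orb}$ is precisely the local Grothendieck residue divided by $\#G_p$ and the Baum--Bott/Poincar\'e--Hopf machinery (together with Satake's orbifold integration $\int^{orb}$) gives
\[
\sum_{p\in\mathrm{Sing}(\mathcal{D})}\mathcal{I}_p^{orb}(\mathcal{D})=\int_{\mathbb{P}_{\Delta}}^{orb} c_n\big(\Omega^1_{\mathbb{P}_{\Delta}}(d)\big),
\]
exactly as in the smooth case; this is the content cited via \cite{Mig, Grif, Iza, Jou} in the previous proof.

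Next I would expand $c_n(\Omega^1_{\mathbb{P}_{\Delta}}(d))$ using the generalized Euler sequence, obtaining, just as in the proof of Proposition~\ref{teo1},
\[
c_n\big(\Omega^1_{\mathbb{P}_{\Delta}}(d)\big)=\sum_{j=0}^{n}(-1)^j\,\texttt{C}_j(h)\,d^{\,n-j},
\]
where $h_i=[D_i]$ and $d=\sum_i d_i[D_i]$ are now regarded in $\mathcal{A}_{n-1}(\mathbb{P}_{\Delta})_{\mathbb{Q}}$ (equivalently as orbifold first Chern classes). Pulling back along $\varphi:\mathbb{P}^n\to\mathbb{P}_{\Delta}$, the hypotheses $\varphi^{\ast}\mathcal{O}_{\mathbb{P}_{\Delta}}([D_i])=\mathcal{O}_{\mathbb{P}^n}(m_i)$ and $\varphi^{\ast}\mathcal{O}_{\mathbb{P}_{\Delta}}(d)=\mathcal{O}_{\mathbb{P}^n}(k)$ turn $\varphi^{\ast}h_i$ into $m_i$ times the hyperplane class and $\varphi^{\ast}d$ into $k$ times the hyperplane class, so that $\varphi^{\ast}\texttt{C}_j(h)=\texttt{C}_j(m)$ in $\mathrm{H}^{2j}(\mathbb{P}^n)$. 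Then the projection formula for $\varphi$ combined with Satake's change-of-variables formula gives
\[
\int_{\mathbb{P}_{\Delta}}^{orb}\texttt{C}_j(h)\,d^{\,n-j}=\frac{1}{\deg\varphi}\int_{\mathbb{P}^n}\varphi^{\ast}\big(\texttt{C}_j(h)\,d^{\,n-j}\big)=\frac{1}{\deg\varphi}\,\texttt{C}_j(m)\,k^{\,n-j},
\]
since $\int_{\mathbb{P}^n}(\text{hyperplane})^n=1$. Summing over $j$ yields the stated formula.

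The main obstacle — and the only place requiring genuine care rather than bookkeeping — is justifying the two ``compatibility'' points: (a) that the Bott-type residue formula of Proposition~\ref{teo1} legitimately extends from the smooth toric setting to the orbifold setting with isolated singularities, i.e.\ that $\sum_p \mathcal{I}_p^{orb}(\mathcal{D})$ equals the orbifold integral of $c_n(\Omega^1_{\mathbb{P}_{\Delta}}(d))$; and (b) that $\varphi$ is compatible with orbifold integration, i.e.\ $\int_{\mathbb{P}_{\Delta}}^{orb}\eta=\frac{1}{\deg\varphi}\int_{\mathbb{P}^n}\varphi^{\ast}\eta$ for a top orbifold form $\eta$. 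Point (b) is Satake's degree formula for orbifold de Rham cohomology (the same identity used for $\mathbb{P}^n\to\mathbb{P}(\omega)$ in Proposition~\ref{exew1}), valid because $\varphi$ is a finite morphism of orbifolds of the same dimension. Point (a) follows because away from $\mathrm{Sing}(\mathbb{P}_{\Delta})$ the distribution has isolated zeros where the classical Poincar\'e--Hopf/Grothendieck residue applies, while at an orbifold point $p$ one pulls back to the local smoothing chart $\widetilde{U}\to U$, where the residue is computed upstairs and divided by $\#G_p$ — which is exactly the definition of $\mathcal{I}_p^{orb}(\mathcal{D})$ — and the global Chern-class count is insensitive to this local averaging. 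I would state both facts with references to \cite{Sata, Mig, Iza} and otherwise let the algebra proceed as above, remarking that Proposition~\ref{exew1} is the special case $\mathbb{P}_{\Delta}=\mathbb{P}(\omega)$, $\varphi(\left[z_0:\cdots:z_n\right])=\left[z_0^{\omega_0}:\cdots:z_n^{\omega_n}\right]_{\omega}$, $m_i=\omega_i$, $k=d$.
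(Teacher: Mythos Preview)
Your proposal is correct and follows exactly the approach the paper indicates: the paper does not give a separate proof of Proposition~\ref{teo20} at all, but simply says that ``the same arguments from the previous proposition can be applied here,'' referring to Proposition~\ref{exew1}. Your write-up spells out precisely those arguments --- the orbifold Bott/Poincar\'e--Hopf identity, the Chern-class expansion from Proposition~\ref{teo1}, and Satake's degree formula $\int^{orb}_{\mathbb{P}_{\Delta}}\eta=\frac{1}{\deg\varphi}\int_{\mathbb{P}^n}\varphi^*\eta$ --- and is in fact more detailed than what the paper provides.
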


\section{Applications}

We will say that a codimension one distribution $\mathcal{D}$ on $\mathbb{P}_{\Delta}$
is \emph{generic} if it has at most isolated singularities, 
and that $\mathcal{D}$ is \emph{regular} if $\mathrm{Sing}\left(\mathcal{D}\right)=\varnothing$.
For the rest of this section, we will make the assumption that all distributions are generic, and we will employ homogeneous coordinates.

\begin{cor} \label{az} Let $\mathbb{P}_{\Delta}$ be an $n$-dimensional compact toric manifold.
Set $h_i=\left[D_i\right]\in\mathcal{A}_{n-1}(\mathbb{P}_{\Delta})$.
Let $\mathcal{D}$ be a codimension one distribution of degree $d$.
Then, $\mathcal{D}$ is singular if $\gcd\left\{d_i\right\} \nmid \texttt{C}_n(h)$.
\end{cor}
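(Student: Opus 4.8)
The plan is to derive this directly from the Bott-type formula in Proposition \ref{teo1} by a divisibility argument on the integer $\#\,\mathrm{Sing}(\mathcal{D})$. Suppose toward a contradiction that $\mathcal{D}$ is regular, i.e. $\mathrm{Sing}(\mathcal{D})=\varnothing$. Then $\mathcal{D}$ in particular has isolated singularities (the empty set), so Proposition \ref{teo1} applies and gives
$$\#\,\mathrm{Sing}\left(\mathcal{D}\right)
=\sum_{j=0}^{n}\left\{(-1)^j\sum_{\left|k\right|=n-j}\binom{n-j}{k}
\int_{\mathbb{P}_{\Delta}} \texttt{C}_j(h)\cdot \prod_i \big(d_i h_i\big)^{k_i}\right\}=0.$$
The idea is to isolate the $j=0$ term, which is $\int_{\mathbb{P}_{\Delta}}\big(\sum_i d_i h_i\big)^n$, and observe that every term with $j\geq 1$ carries at least one explicit factor from $\texttt{C}_j(h)$ that is independent of the $d_i$. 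More precisely, let $g=\gcd\{d_i\}$ and write $d_i=g\,d_i'$. Then $\sum_i d_i h_i=g\sum_i d_i' h_i$, so for each $j$ the summand of $c_n(\Omega^1_{\mathbb{P}_{\Delta}}(d))$ contributing at level $j$ is $(-1)^j\,\texttt{C}_j(h)\cdot g^{\,n-j}\big(\sum_i d_i' h_i\big)^{n-j}$; in particular it is divisible by $g^{\,n-j}$, hence by $g$ whenever $j\leq n-1$.

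Carrying this out: reorganize the identity $\#\,\mathrm{Sing}(\mathcal{D})=\int_{\mathbb{P}_{\Delta}}c_n(\Omega^1_{\mathbb{P}_{\Delta}}(d))$ as
$$\#\,\mathrm{Sing}(\mathcal{D})=g^n\!\int_{\mathbb{P}_{\Delta}}\Big(\sum_i d_i' h_i\Big)^{\!n}
+\sum_{j=1}^{n}(-1)^j g^{\,n-j}\!\int_{\mathbb{P}_{\Delta}}\texttt{C}_j(h)\,\Big(\sum_i d_i' h_i\Big)^{\!n-j}.$$
All terms on the right with $1\leq j\leq n-1$ are divisible by $g$, and the first term (the $j=0$ piece) is divisible by $g^n$, hence by $g$. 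Therefore, reducing modulo $g$,
$$\#\,\mathrm{Sing}(\mathcal{D})\equiv(-1)^n\!\int_{\mathbb{P}_{\Delta}}\texttt{C}_n(h)\pmod{g},$$
since for $j=n$ the factor $g^{\,n-j}=g^0=1$ and $\big(\sum_i d_i' h_i\big)^0=1$, leaving exactly $(-1)^n\int_{\mathbb{P}_{\Delta}}\texttt{C}_n(h)=(-1)^n\,\texttt{C}_n(h)$ (identifying the top intersection number with the integer $\texttt{C}_n(h)$ as the paper's notation does). If $\mathcal{D}$ were regular we would have $\#\,\mathrm{Sing}(\mathcal{D})=0$, forcing $g\mid \texttt{C}_n(h)$; contrapositively, if $\gcd\{d_i\}\nmid\texttt{C}_n(h)$ then $\mathcal{D}$ is singular.

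I expect the only subtle point to be bookkeeping rather than mathematics: one must check that "regular implies isolated singularities" so that Proposition \ref{teo1} is legitimately invoked with value $0$, and that the $j=n$ contribution is precisely $(-1)^n\texttt{C}_n(h)$ with all $d$-dependence gone (the multinomial sum over $|k|=0$ has the single term $k=(0,\dots,0)$, so no binomial coefficients intrude). A minor notational caveat is the paper's conflation of the cohomology class $\texttt{C}_n(h)\in\mathcal{A}_{0}(\mathbb{P}_{\Delta})$ with its degree $\int_{\mathbb{P}_{\Delta}}\texttt{C}_n(h)\in\mathbb{Z}$; the statement of the corollary implicitly uses the latter, and the proof should simply adopt that identification. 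No genuine obstacle arises: the whole argument is a one-line congruence extracted from the already-established Bott formula.
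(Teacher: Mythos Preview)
Your proof is correct and takes essentially the same approach as the paper: assume $\mathcal{D}$ is regular, apply Proposition~\ref{teo1} to get $0=\#\,\mathrm{Sing}(\mathcal{D})$, isolate the $j=n$ term $(-1)^n\texttt{C}_n(h)$, and note that every term with $j\le n-1$ carries at least one factor $d_i$ and is hence divisible by $g=\gcd\{d_i\}$. The paper phrases this last step as rewriting the remaining sum in the form $\sum_i a_i d_i$ with $a_i\in\mathbb{Z}$, whereas you factor $d_i=g\,d_i'$ to exhibit the divisibility by $g^{\,n-j}$; these are cosmetic variants of the same one-line divisibility observation.
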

\begin{proof}
By Proposition \ref{teo1}, if $\mathrm{Sing}\left(\mathcal{D}\right)=\varnothing$, then we can write
$$0=\sum_{j=0}^{n}\left\{(-1)^j\sum_{\left|k\right|=n-j}\binom{n-j}{k}
\int_{\mathbb{P}_{\Delta}} \texttt{C}_j(h)\cdot \prod_i \big(d_i h_i\big)^{k_i}\right\} 
=(-1)^n \texttt{C}_n(h) + \sum_{i}a_i d_i,$$
for some $a_i\in\mathbb{Z}$. Then $\gcd\left\{d_i\right\} \mid \texttt{C}_n(h)$.
\end{proof}

Likewise, the subsequent corollary can be derived from Proposition \ref{teo20} and Corollary \ref{az}:

\begin{cor} \label{orbcor} Let $\mathbb{P}_{\Delta}$ be an $n$-dimensional compact toric orbifold with at most isolated singularities. Let $\mathcal{D}$ be a codimension one distribution of degree $d$.
Suppose there is a natural map $\varphi:\mathbb{P}^n\rightarrow\mathbb{P}_{\Delta}$ of finite degree such that 
$\varphi^{\ast}\mathcal{O}_{\mathbb{P}_{\Delta}}(\left[D_i\right])=\mathcal{O}_{\mathbb{P}^m}(m_i)$ and 
$\varphi^{\ast}\mathcal{O}_{\mathbb{P}_{\Delta}}(d)=\mathcal{O}_{\mathbb{P}^m}(k)$ for some $k\neq 0$.
Then, $\mathcal{D}$ is regular if and only if 
$$\prod_{i=1}^{n+r}\left(k-m_i\right)=(-1)^n\sum_{i=1}^r(-1)^i\texttt{C}_{n+i}(m)\,k^{\,r-i}.$$
Moreover, $\mathcal{D}$ is singular if $k \nmid \texttt{C}_n(m)$.
\end{cor}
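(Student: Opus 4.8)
The statement to prove is Corollary \ref{orbcor}, which has two parts: a regularity criterion (an ``if and only if'') and a sufficient condition for singularity in terms of $\gcd\{d_i\}$. I would derive both from Proposition \ref{teo20}, exactly as Corollary \ref{az} was derived from Proposition \ref{teo1}. The starting point is the Bott-type formula
$$
\sum_{p\in\mathrm{Sing}(\mathcal{D})}\mathcal{I}_p^{orb}(\mathcal{D})=\frac{1}{\deg\varphi}\sum_{j=0}^{n}(-1)^j\,\texttt{C}_j(m)\,k^{\,n-j},
$$
valid under the hypotheses (compact toric orbifold with isolated singularities, $\mathcal{D}$ generic, natural map $\varphi$ of finite degree with $\varphi^{\ast}\mathcal{O}(\,[D_i])=\mathcal{O}_{\mathbb{P}^n}(m_i)$ and $\varphi^{\ast}\mathcal{O}(d)=\mathcal{O}_{\mathbb{P}^n}(k)$). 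The key observation is that $\mathcal{D}$ is regular precisely when the left-hand side vanishes, i.e. when $\sum_{j=0}^{n}(-1)^j\texttt{C}_j(m)k^{\,n-j}=0$, since $\deg\varphi\neq0$ and each orbifold index $\mathcal{I}_p^{orb}$ is, in this isolated-singularity setting, a nonzero number (a sum of positive rational Grothendieck residues); so the sum cannot be zero unless there are no singular points. This is the one place I would be careful: I must use that a generic codimension one distribution with an isolated zero has strictly positive index at that zero, which is the orbifold Poincar\'e--Hopf / Baum--Bott positivity already invoked in Proposition \ref{exew1}.

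\emph{First part (the equivalence).} Having reduced regularity to the vanishing of $P(k):=\sum_{j=0}^{n}(-1)^j\texttt{C}_j(m)\,k^{\,n-j}$, I would simply rearrange this into the stated closed form. The trick is to recognize the generating identity
$$
\prod_{i=1}^{n+r}(k-m_i)=\sum_{j=0}^{n+r}(-1)^j\,\texttt{C}_j(m)\,k^{\,n+r-j},
$$
since $\texttt{C}_j(m)$ is the $j$th elementary symmetric function of the $n+r$ variables $m_1,\dots,m_{n+r}$. Dividing by $k^{\,r}$ (here one uses $k\neq0$) gives
$$
\frac{1}{k^{\,r}}\prod_{i=1}^{n+r}(k-m_i)=\sum_{j=0}^{n+r}(-1)^j\texttt{C}_j(m)\,k^{\,n-j}=P(k)+\sum_{i=1}^{r}(-1)^{n+i}\texttt{C}_{n+i}(m)\,k^{\,-i}.
$$
Thus $P(k)=0$ is equivalent to $\prod_{i=1}^{n+r}(k-m_i)=-\,k^{\,r}\sum_{i=1}^{r}(-1)^{n+i}\texttt{C}_{n+i}(m)\,k^{\,-i}=(-1)^{n}\sum_{i=1}^{r}(-1)^{i}\texttt{C}_{n+i}(m)\,k^{\,r-i}$, which is exactly the displayed equation in the statement (after multiplying by $k^{r}$ to clear denominators). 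I would write this as a short chain of equalities and note explicitly that $P$ has degree $n$ in $k$ while $\prod(k-m_i)$ has degree $n+r$, so the ``correction terms'' $\texttt{C}_{n+1},\dots,\texttt{C}_{n+r}$ are precisely what accounts for the difference.

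\emph{Second part (the $\gcd$ obstruction).} For this I mimic Corollary \ref{az} directly. If $\mathcal{D}$ is regular then the Bott formula gives $0=\sum_{j=0}^{n}(-1)^j\texttt{C}_j(m)\,k^{\,n-j}$. Since $k$ is the ``total degree'' of $\mathcal{D}$ pulled back under $\varphi$, $k$ is a $\mathbb{Z}$-linear combination of the $d_i$ (indeed $\varphi^{\ast}\mathcal{O}(\sum d_i D_i)=\mathcal{O}_{\mathbb{P}^n}(k)$ forces $k=\sum_i d_i m_i$), hence $\gcd\{d_i\}\mid k$; also each term with $j<n$ in the sum carries at least one factor of $k$, while the $j=n$ term is $(-1)^n\texttt{C}_n(m)$. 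Reducing modulo $\gcd\{d_i\}$ gives $0\equiv(-1)^n\texttt{C}_n(m)$, i.e. $\gcd\{d_i\}\mid\texttt{C}_n(m)$; contrapositively, $\gcd\{d_i\}\nmid\texttt{C}_n(m)$ forces $\mathcal{D}$ to be singular. The only genuine subtlety, and the place I expect to spend a sentence justifying carefully, is that $\texttt{C}_n(m)$ really is an integer and that $k\equiv0\pmod{\gcd\{d_i\}}$ — both follow because $m_1,\dots,m_{n+r}$ are integers (the $\varphi^{\ast}\mathcal{O}(D_i)$ are honest line bundles on $\mathbb{P}^n$) and $k=\sum_i d_i m_i$. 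Everything else is bookkeeping with elementary symmetric polynomials, so the main conceptual point remains the positivity of the orbifold index used to pass from ``the Bott number is zero'' to ``$\mathcal{D}$ is regular.''
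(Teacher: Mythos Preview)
Your approach matches the paper's exactly: reduce regularity via Proposition \ref{teo20} to the vanishing of $\sum_{j=0}^{n}(-1)^j\texttt{C}_j(m)\,k^{\,n-j}$, expand $\prod_{i=1}^{n+r}(k-m_i)=\sum_{j=0}^{n+r}(-1)^j\texttt{C}_j(m)\,k^{\,n+r-j}$, and split off the tail terms with $j>n$ to obtain the displayed identity (the paper factors out $k^{\,r}$ rather than dividing by it, but this is the same manipulation). You are in fact more careful than the paper in justifying the equivalence ``sum of orbifold indices $=0\Leftrightarrow$ no singularities'' via positivity and in spelling out the $\gcd$ argument; note only a harmless sign slip in your intermediate step (the spurious minus in front of $k^{\,r}$ should not be there, though two sign errors cancel and you land on the correct formula).
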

\begin{proof} By Proposition \ref{teo20} we have, $\mathcal{D}$ is regular if and only if
$$\sum_{j=0}^{n}(-1)^j \texttt{C}_j(m)\,k^{\,n-j}=0.$$
Then, $\mathcal{D}$ is regular if and only if
\begin{eqnarray*}
\prod_{i=1}^{n+r}\left(k-m_i\right)&=&\sum_{j=0}^{n+r}(-1)^j \texttt{C}_j(m)\,k^{\,n+r-j}\\
&=& k^{\,r}\sum_{j=0}^{n}(-1)^j \texttt{C}_j(m)\,k^{\,n-j} + \sum_{j=n+1}^{n+r}(-1)^j \texttt{C}_j(m)\,k^{\,n+r-j}\\
&=&(-1)^n\sum_{i=1}^r(-1)^i\texttt{C}_{n+i}(m)\,k^{\,r-i}.
\end{eqnarray*}
\end{proof}

\begin{exe} Consider the multiprojective space $\mathbb{P}_{\Delta}=\mathbb{P}^n\times\mathbb{P}^m$.
Let $\mathcal{D}$ be a codimension one distribution of degree $d=(d_1, d_2)$
on $\mathbb{P}^n\times\mathbb{P}^m$. Then, 
$\mathcal{D}$ is singular if $\gcd \left(d_1,d_2\right) \nmid (n+1)(m+1)$, see Example \ref{exmul}.
Similarly, it is easy to see that on $\mathbb{P}_{\Delta}=\mathbb{P}^{n_1}\times\cdots\times\mathbb{P}^{n_k}$, a 
codimension one distribution of degree $d=(d_1,\ldots, d_k)$ is singular if
$\gcd\left\{d_i\right\} \nmid \prod_{i=1}^k (n_i+1)$.
\end{exe}

Now, let's turn our attention to low-dimensional multiprojective spaces:

\begin{cor}\label{Me} Let $\mathcal{D}$ be a codimension one distribution on $\mathbb{P}_{\Delta}$.
\begin{enumerate}
	\item If $\mathbb{P}_{\Delta}=\mathbb{P}^{1}\times\mathbb{P}^{1}\times\mathbb{P}^{1}$ and $\mathcal{D}$ regular, we have
$\deg\hspace{-0.05cm}\left(\mathcal{D}\right)\in\left\{(2,0,0), (0,2,0), (0,0,2)\right\}$. Then, in homogeneous coordinates, $\mathcal{D}$ 
is induced by
$$\omega=z_{i1}dz_{i0}-z_{i0}dz_{i1},\,\,\mbox{for some}\,\,i=1,2,3.$$
In particular $\mathcal{D}$ is integrable and admits a rational first integral.
Moreover $\mathcal{D}$ is a natural fibration given by $\pi_i:\mathbb{P}_{\Delta}\rightarrow\mathbb{P}^1$, for some $i=1,2,3$.
\vspace{0.1cm}
	\item If $\mathbb{P}_{\Delta}=\mathbb{P}^{2}\times\mathbb{P}^{1}$ and $\mathcal{D}$ regular, we have
$\deg\hspace{-0.05cm}\left(\mathcal{D}\right)=(0,2)$. Then, in homogeneous coordinates, $\mathcal{D}$ is induced by
$$\omega=z_{21}dz_{20}-z_{20}dz_{21}.$$
In particular $\mathcal{D}$ is integrable and admits a rational first integral.
Moreover $\mathcal{D}$ is the natural fibration given by $\pi_2:\mathbb{P}_{\Delta}\rightarrow\mathbb{P}^1$.
\vspace{0.1cm}
	\item If $\mathbb{P}_{\Delta}=\mathbb{P}^{2}\times\mathbb{P}^{2}$, we have $\mathrm{Sing}(\mathcal{D})\neq\varnothing$,
that is, there are no regular distributions. 
\end{enumerate}
\end{cor}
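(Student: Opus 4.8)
The plan is to apply the numerical criterion of Corollary \ref{orbcor} (equivalently, Proposition \ref{teo20}) to each of the three multiprojective spaces, since each is smooth and carries an obvious Segre-type finite map from a single projective space $\mathbb{P}^n$; for the smooth cases one may equally invoke Corollary \ref{az} together with the explicit Bott formula of Example \ref{exmul}. For a product $\mathbb{P}^{n_1}\times\cdots\times\mathbb{P}^{n_k}$ one has $h_i$ constant on each factor, $\texttt{C}_j(h)$ expressible through the elementary symmetric functions in the $k$ hyperplane classes (each repeated $n_i+1$ times), and the top Chern number computed by expanding and keeping only the monomial $h_1^{n_1}\cdots h_k^{n_k}$. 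So in each case the first step is to write down $\#\mathrm{Sing}(\mathcal{D})$ as an explicit polynomial in the degree components $d_i$ and determine for which $d=(d_1,\dots,d_k)$ with all $d_i\ge 0$ this polynomial vanishes.

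For part (1), $\mathbb{P}^1\times\mathbb{P}^1\times\mathbb{P}^1$: here Example \ref{exmul} generalizes to give, with $d=(d_1,d_2,d_3)$,
$$\#\mathrm{Sing}(\mathcal{D})=-\sum_{0\le k_i\le 1}(-1)^{k_1+k_2+k_3}\binom{k_1+k_2+k_3}{k_1,k_2,k_3}\binom{2}{1-k_1}\binom{2}{1-k_2}\binom{2}{1-k_3}d_1^{k_1}d_2^{k_2}d_3^{k_3},$$
which factors (by the symmetry of the construction, since $T(\mathbb{P}^1\times\mathbb{P}^1\times\mathbb{P}^1)$ splits) and one finds the regularity condition forces exactly $d\in\{(2,0,0),(0,2,0),(0,0,2)\}$. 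Then I would argue on the level of homogeneous polynomial $1$-forms: a distribution of degree $(2,0,0)$ is given by $\omega=P_{1,0}\,dz_{10}+P_{1,1}\,dz_{11}$ with $P_{1,\bullet}$ of the appropriate degrees, subject to $i_{R_1}\omega=i_{R_2}\omega=i_{R_3}\omega=0$; the constraints force $\omega$ to be a scalar multiple of $z_{11}dz_{10}-z_{10}dz_{11}$, whose singular set in homogeneous coordinates is $\{z_{10}=z_{11}=0\}$, which lies in $\mathcal{Z}$, so $\mathcal{D}$ is indeed regular, visibly integrable, with rational first integral $z_{10}/z_{11}$. Parts (2) and (3) are handled by the same recipe: for $\mathbb{P}^2\times\mathbb{P}^1$ the vanishing analysis leaves only $d=(0,2)$ and the same $1$-form computation on the $\mathbb{P}^1$-factor; for $\mathbb{P}^2\times\mathbb{P}^2$ one checks that the polynomial $\#\mathrm{Sing}(\mathcal{D})$ in $(d_1,d_2)$ has no zero with $d_1,d_2\ge 0$ — e.g. by showing it is a sum of nonnegative terms that cannot all vanish, or by invoking the divisibility obstruction of Corollary \ref{az} with $\texttt{C}_n(h)=\binom{3}{1}^2=9$ and noting the remaining low-degree cases by hand.

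The main obstacle I anticipate is not the Chern-number bookkeeping but the passage from the numerical condition to the explicit normal form: one must check that the only global sections $\omega$ of $\Omega^1_{\mathbb{P}_\Delta}(d)$ in these minimal degrees, after imposing contraction by all radial vector fields, are the claimed ones, and in particular that they genuinely have empty singular set in $\mathbb{P}_\Delta$ (i.e. the homogeneous zero locus is contained in $\mathcal{Z}$) rather than merely a finite one — this is what upgrades "generic/isolated" to "regular". For $\mathbb{P}^2\times\mathbb{P}^2$ the subtlety is instead to rule out \emph{all} bidegrees at once; I would combine the divisibility test $\gcd(d_1,d_2)\mid 9$ from Corollary \ref{az} with a direct inspection of the finitely many surviving bidegrees $(d_1,d_2)$ to conclude $\#\mathrm{Sing}(\mathcal{D})>0$ in every case, which is the assertion that no regular codimension one distribution exists.
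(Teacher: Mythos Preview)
Your strategy coincides with the paper's: compute $\#\mathrm{Sing}(\mathcal{D})$ via Proposition \ref{teo1} (essentially Example \ref{exmul}), solve the resulting Diophantine condition, and then pin down the normal form in homogeneous coordinates. Two steps, however, do not go through as you state them.

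For part (1), the singularity count does \emph{not} factor in the way you suggest, and the numerical equation
\[
3d_1d_2d_3 - 2(d_1d_2+d_1d_3+d_2d_3) + 2(d_1+d_2+d_3) - 4 = 0
\]
has nonnegative solutions beyond $(2,0,0)$ and its permutations: for instance $(1,1,2)$ (and permutations) also satisfies it, as the paper explicitly records. These spurious degrees are eliminated only by the homogeneous-form analysis: for $d=(1,1,2)$ the conditions $i_{R_1}\omega=i_{R_2}\omega=0$ force $P_{1,\bullet}=P_{2,\bullet}=0$, and then $i_{R_3}\omega=0$ gives $\omega=Q\,(z_{31}\,dz_{30}-z_{30}\,dz_{31})$ with $Q$ bilinear of degree $(1,1,0)$, whose zero set has codimension one --- so no \emph{generic} distribution of that degree exists. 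Thus the form analysis is doing more than producing the normal form; it is what actually rules out the extra numerical solutions, and your proposal underestimates this.

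For part (3), the divisibility test $\gcd(d_1,d_2)\mid 9$ from Corollary \ref{az} does \emph{not} leave only finitely many bidegrees: every pair with $\gcd(d_1,d_2)\in\{1,3,9\}$ survives, and there are infinitely many of those. The paper instead writes out
\[
\#\mathrm{Sing}(\mathcal{D}) = 2d_1^2d_2^2 - 3d_1d_2^2 - 3d_1^2d_2 + d_1^2 + 6d_1d_2 + d_2^2 - 3d_1 - 3d_2 + 3
\]
and checks directly that this never vanishes for $(d_1,d_2)\in\mathbb{Z}^2$ (for instance: it equals $1$ when $d_2=1$, and for $d_2\neq 1$ it is a quadratic in $d_1$ with leading coefficient $(2d_2-1)(d_2-1)$ and negative discriminant). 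Your first suggestion --- proving positivity of the polynomial outright --- is the one that works; the divisibility shortcut cannot close the argument.
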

\begin{proof}
$(1)$ Consider $\mathbb{P}_{\Delta}=\mathbb{P}^{1}\times\mathbb{P}^{1}\times\mathbb{P}^{1}$. 
Here $h_1=h_{10}=h_{11}$, $h_2=h_{20}=h_{21}$ and $h_3=h_{30}=h_{31}$, with $h_1^2=h_2^2=0$ and $h_1\cdot h_2=1$. 
Let $\mathcal{D}$ be a codimension one regular distribution of degree $d=(d_1,d_2,d_3)=d_1 h_1+d_2h_2+d_3h_3$
on $\mathbb{P}_{\Delta}$. By Proposition \ref{teo1} we have
$$\sum_{j=0}^{3}(-1)^j \texttt{C}_j(h)\left(d_1 h_1+d_2h_2+d_3h_3\right)^{3-j}=0.$$
Substituting $\texttt{C}_0(h)=1$, $\texttt{C}_1(h)=2\left(h_1+h_2+h_3\right)$, $\texttt{C}_2(h)=4\left(h_1h_2+h_1h_3+h_2h_3\right)$ 
and $\texttt{C}_3(h)=8\,h_1h_2h_3$ we obtain
$$3\,d_1d_2d_3-2\left(d_1d_2+d_1d_3+d_2d_3\right)+2\left(d_1+d_2+d_3\right)-4=0.$$ 
Since at least one $d_i\geq 1$, the integer solutions are given by $d=\left(1,1,2\right)$, $d=\left(1,-1,-2\right)$ and $d=\left(2,0,0\right)$
(and permutation of $d_i$). Using homogeneous coordinates, $\mathcal{D}$ is induced by a polynomial form 
$$\omega=\sum_{i=1}^3 \left(P_{i0}dz_{i0}+P_{i1}dz_{i1}\right),$$ 
where $\deg(P_{10})=\deg(P_{11})=(d_1-1,d_2,d_3)$, $\deg(P_{20})=\deg(P_{21})=(d_1,d_2-1,d_3)$ and $\deg(P_{30})=\deg(P_{31})=(d_1,d_2,d_3-1)$.
Also, $i_{R_i} \, \omega=0$ for $R_i = z_{i0} \frac{\partial}{\partial z_{i0}}+z_{i1} \frac{\partial}{\partial z_{i1}}$, $i=1,2,3$.
So, it is easy to check that $d=\left(2,0,0\right)$ (and permutation of $d_i$), and so $\mathcal{D}$ is induced by
$$\omega=z_{i1}dz_{i0}-z_{i0}dz_{i1},\,\,\mbox{for some}\,\,i=1,2,3.$$
$(2)$ Consider $\mathbb{P}_{\Delta}=\mathbb{P}^{2}\times\mathbb{P}^{1}$. 
Here $h_1=h_{10}=h_{11}=h_{12}$ and $h_2=h_{20}=h_{21}$, with $h_1^3=h_2^2=0$ and $h_1^2\cdot h_2=1$. 
Let $\mathcal{D}$ be a codimension one regular distribution of degree $d=(d_1,d_2)=d_1 h_1+d_2h_2$
on $\mathbb{P}_{\Delta}$. By Proposition \ref{teo1} we have
$$\sum_{j=0}^{3}(-1)^j \texttt{C}_j(h)\left(d_1 h_1+d_2h_2\right)^{3-j}=0.$$
Substituting $\texttt{C}_0(h)=1$, $\texttt{C}_1(h)=3h_1+2h_2$, $\texttt{C}_2(h)=3h_1^2+6h_1h_2$ 
and $\texttt{C}_3(h)=6\,h_1^2h_2$ we obtain
$$d_2=\frac{2\left(d_1^2-3d_1+3\right)}{3\left(d_1-1\right)^2}.$$
Thus, it is easy to check that $d=\left(0,2\right)$, and so $\mathcal{D}$ is induced by
$$\omega=z_{21}dz_{20}-z_{20}dz_{21}.$$
$(3)$ Consider $\mathbb{P}_{\Delta}=\mathbb{P}^{2}\times\mathbb{P}^{2}$. 
Here $h_1=h_{10}=h_{11}=h_{12}$ and $h_2=h_{20}=h_{21}=h_{22}$, with $h_1^3=h_2^3=0$ and $h_1^2\cdot h_2^2=1$. 
Let $\mathcal{D}$ be a codimension one distribution of degree $d=(d_1,d_2)=d_1 h_1+d_2h_2$
on $\mathbb{P}_{\Delta}$. By Proposition \ref{teo1} we have
$$\#\,\mathrm{Sing}\hspace{-0.05cm}\left(\mathcal{D}\right)=\sum_{j=0}^{4}(-1)^j \texttt{C}_j(h)\left(d_1 h_1+d_2h_2\right)^{4-j}.$$
Substituting $\texttt{C}_0(h)=1$, $\texttt{C}_1(h)=3h_1+3h_2$, $\texttt{C}_2(h)=3h_1^2+9h_1h_2+3h_2^2$, 
$\texttt{C}_3(h)=9h_1^2h_2+9h_1h_2^2$, and $\texttt{C}_4(h)=9h_1^2h_2^2$ we obtain
$$\#\,\mathrm{Sing}\hspace{-0.05cm}\left(\mathcal{D}\right)=2 d_1^2d_2^2-3d_1d_2^2-3d_1^2d_2+d_2^2+6d_1d_2+d_1^2-3d_2-3d_1+3.$$ 
Hence, it is easy to check that $\#\,\mathrm{Sing}\hspace{-0.05cm}\left(\mathcal{D}\right)\neq 0$ for all $d\in \mathbb{Z}\times\mathbb{Z}$.
\end{proof}

\bigskip

\begin{exe} Consider the Hirzebruch surface $\mathcal{H}_r$, $r\geq 0$.
Here $\mathrm{Pic}(\mathcal{H}_r)=\mathbb{Z}h_1\oplus\mathbb{Z}h_2$, where 
$h_1^2=0$, $h_1\cdot h_2=1$ and $h_2^2=-r$. Moreover $h_3=h_1$ and $h_4=rh_1+h_2$; see \cite{Cox2}.
In particular $\texttt{C}_2(h)=\sum_{1\leq i<j\leq 4}h_i\cdot h_j=4$. 
Let $\mathcal{F}$ be a holomorphic foliation of degree $d=(d_1, d_2)$ on $\mathcal{H}_r$. 
Then, $\mathcal{F}$ is singular if $\gcd \left(d_1,d_2\right) \nmid 4$. \\ 

In \cite[p.37]{Bru}, M. Brunella shows that if $\mathcal{F}$ is a regular foliation on $\mathcal{H}_r$, then
$\mathcal{F}$ is a $\mathbb{P}^1$-fibration over $\mathbb{P}^1$.  
We will now present an alternate proof for this assertion. Utilizing homogeneous coordinates, 
we shall provide normal forms for these foliations:

\end{exe}
\begin{cor} \label{B}
Let $\mathcal{F}$ be a holomorphic foliation of degree $d=(d_1, d_2)$ on $\mathcal{H}_r$. 
Then, $\mathcal{F}$ is regular if and only if
\begin{itemize}
	\item[(1)] $d_1=0$ and $d_2=2$, for $r=0$. Then, in homogeneous coordinates, $\mathcal{F}$ is induced by
$$\omega=z_{2,2}dz_{1,2}-z_{1,2}dz_{2,2};$$
or
	\item[(2)] $d_1=2$ and $d_2=0$, for $r\geq0$. Then, in homogeneous coordinates, $\mathcal{F}$ is induced by
$$\omega=z_{2,1}dz_{1,1}-z_{1,1}dz_{2,1}.$$
\end{itemize}
In particular $\mathcal{F}$ is a $\mathbb{P}^1$-fibration over $\mathbb{P}^1$;
$\mathcal{F}$ is given by the canonical projection $\mathcal{H}_r\rightarrow\mathbb{P}^1$. \\
Moreover, there are no holomorphic foliations with a unique singularity (with multiplicity $1$).
\end{cor}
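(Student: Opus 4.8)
The plan is to combine the Bott-type formula of Proposition~\ref{teo1} with an explicit analysis in the homogeneous coordinates $z_{1,1},z_{1,2},z_{2,1},z_{2,2}$ of $\mathcal{H}_r$. Feeding $\texttt{C}_0(h)=1$, $\texttt{C}_1(h)=(2+r)h_1+2h_2$, $\texttt{C}_2(h)=4$ and $h_1^2=0$, $h_1\cdot h_2=1$, $h_2^2=-r$ into Proposition~\ref{teo1}, a generic foliation $\mathcal{F}$ of degree $(d_1,d_2)$ satisfies
\[
\#\,\mathrm{Sing}(\mathcal{F})=2d_1d_2-rd_2^2-(2-r)d_2-2d_1+4 .
\]
One checks this number is always even (the term $rd_2(d_2-1)$ is even); in particular it is never $1$, which already proves the last assertion, since a foliation with one singularity of multiplicity one is generic with $\#\,\mathrm{Sing}=1$. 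If $\mathcal{F}$ is regular then $\#\,\mathrm{Sing}(\mathcal{F})=0$, and the identity above rewrites as $(d_2-1)(2d_1-rd_2-2)=-2$; hence $d_2-1\in\{\pm1,\pm2\}$ and the only candidate degrees are $(d_1,d_2)=(2,0)$, $(r,2)$, and, when $r$ is odd, $\bigl(\tfrac{3-r}{2},-1\bigr)$ and $\bigl(\tfrac{3r+1}{2},3\bigr)$.

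Next I would pass to homogeneous coordinates: write $\omega=\sum P_{i,j}\,dz_{i,j}$, with $\deg P_{1,1}=\deg P_{2,1}=(d_1-1,d_2)$, $\deg P_{1,2}=(d_1,d_2-1)$, $\deg P_{2,2}=(d_1-r,d_2-1)$, subject to $i_{R_1}\omega=i_{R_2}\omega=0$, that is $z_{1,1}P_{1,1}+z_{2,1}P_{2,1}+rz_{2,2}P_{2,2}=0$ and $z_{1,2}P_{1,2}+z_{2,2}P_{2,2}=0$. Coprimality of $z_{1,2},z_{2,2}$ in the second relation gives $P_{1,2}=z_{2,2}Q$, $P_{2,2}=-z_{1,2}Q$ with $\deg Q=(d_1-r,d_2-2)$, so that $\omega=P_{1,1}dz_{1,1}+P_{2,1}dz_{2,1}+Q\,(z_{2,2}dz_{1,2}-z_{1,2}dz_{2,2})$ and $z_{1,1}P_{1,1}+z_{2,1}P_{2,1}=r\,z_{1,2}z_{2,2}Q$. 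For $(d_1,d_2)=(2,0)$ one has $Q=0$ and $z_{1,1}P_{1,1}+z_{2,1}P_{2,1}=0$, forcing $P_{1,1}=z_{2,1}S$, $P_{2,1}=-z_{1,1}S$ with $S$ a nonzero constant; for $(d_1,d_2)=(0,2)$ and $r=0$ degree reasons force $P_{1,1}=P_{2,1}=0$ while $Q$ is a nonzero constant. In both cases $\omega$ is (up to scale) the form displayed in items (2) and (1), is visibly nowhere zero on $\mathbb{C}^4\setminus\mathcal{Z}$ since $\{\omega=0\}\subset\mathcal{Z}$, and equals the pull-back of the tautological $1$-form of $\mathbb{P}^1$ along the canonical projection $(z_{i,j})\mapsto[z_{1,1}:z_{2,1}]$ (resp. $[z_{1,2}:z_{2,2}]$), i.e. $\mathcal{F}$ is the corresponding $\mathbb{P}^1$-fibration $\mathcal{H}_r\to\mathbb{P}^1$.

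It remains to eliminate the other candidates. For $\bigl(\tfrac{3-r}{2},-1\bigr)$ there is nothing to do: $d_2<0$ makes every $P_{i,j}$ vanish, so there is no nonzero $\omega$. For $(r,2)$ with $r\ge1$ and for $\bigl(\tfrac{3r+1}{2},3\bigr)$ with $r$ odd the key remark is a monomial count in the graded ring $\mathrm{S}$ of $\mathcal{H}_r$: a monomial $z_{1,1}^{p_1}z_{1,2}^{p_2}z_{2,1}^{q_1}z_{2,2}^{q_2}$ of degree $(\alpha,\beta)$ with $p_2=0$ forces $p_1+q_1=\alpha-r\beta$, so if $\alpha<r\beta$ then $z_{1,2}$ divides every element of $\mathrm{S}_{(\alpha,\beta)}$. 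Applying this to $\deg Q$ and to $\deg P_{1,1}=\deg P_{2,1}$ — and, in the single borderline case $r=1$ where $z_{2,2}$ could a priori occur linearly in $Q$, using instead that $r\,z_{1,2}z_{2,2}Q$ must lie in the prime ideal $(z_{1,1},z_{2,1})$, which kills that term — one finds that $z_{1,2}$ divides all of $P_{1,1},P_{2,1},P_{1,2},P_{2,2}$; hence $z_{1,2}\mid\omega$, so $\mathrm{Sing}(\mathcal{F})$ contains the (negative) section $\{z_{1,2}=0\}$ and $\mathcal{F}$ is not regular. (For $(r,2)$ the ideal argument first yields $Q=0$, whence $\omega=S(z_{2,1}dz_{1,1}-z_{1,1}dz_{2,1})$ with $\deg S=(r-2,2)$, which is $0$ when $r=1$ and divisible by $z_{1,2}^2$ when $r\ge2$.) Collecting the cases gives exactly the list in the statement, and the description of $\mathcal{F}$ as a canonical ruling follows from the second paragraph.

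I expect the third step to be the main obstacle: Proposition~\ref{teo1} alone cannot exclude the degrees $(r,2)$ and $\bigl(\tfrac{3r+1}{2},3\bigr)$, since they do satisfy $\#\,\mathrm{Sing}=0$ formally, so one genuinely needs the bookkeeping of the graded pieces of $\mathrm{S}$ together with the two Euler relations $i_{R_1}\omega=i_{R_2}\omega=0$; the value $r=1$, where $z_{2,2}$ may appear linearly in $Q$, is the most delicate point, while the rest of the argument is routine.
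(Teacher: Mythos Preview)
Your proof is correct and follows the same strategy as the paper: apply Proposition~\ref{teo1} to obtain the diophantine constraint $(d_2-1)(2d_1-rd_2-2)=-2$, list the candidate degrees, and then eliminate the spurious ones by an explicit monomial/degree analysis in the homogeneous coordinate ring combined with the two Euler relations $i_{R_1}\omega=i_{R_2}\omega=0$. Your parity observation for the ``unique singularity'' claim and your explicit disposal of the case $d_2=-1$ are minor improvements over the paper, which handles the former by direct inspection of $(d_2-1)(d_2r-2(d_1-1))=-1$ and tacitly omits the latter; otherwise the two arguments are essentially interchangeable.
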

\begin{proof}
Set $d=d_1h_1+d_2h_2$. By Proposition \ref{teo1} we have,
$\mathcal{F}$ is regular if and only if
\begin{eqnarray*}
0 &=&\sum_{j=0}^{2}(-1)^j d^{\,2-j}\texttt{C}_j(h) \\
&=& d^{\,2}-d\,\texttt{C}_1(h)+\texttt{C}_2(h) \\
&=& 2d_1d_2-d_2^{\,2}r-\left(d_1h_1+d_2h_2\right)\big((2+r)h_1+2h_2\big)+4 \\
&=& 2d_1d_2-2d_1-2d_2+4-d_2\left(d_2-1\right)r \\
&=& 2(d_1-1)(d_2-1)+2-d_2\left(d_2-1\right)r. 
\end{eqnarray*}
Then, $\mathcal{F}$ is regular if and only if $(d_2-1)\big(d_2r-2(d_1-1)\big)=2$. Then $d_2=0$, $d_2=2$ or $d_2=3$.
If $d_2=0$, then $d_1=2$. If $d_2=2$, then $d_1=r$. If $d_2=3$, then we have $2d_1-3r=1$.
Now, let us show $d_1=r>0$ and $d_2=2$ is not possible. Suppose $d_1=r>0$ and $d_2=2$, then 
$\mathcal{F}$ is induced by a polynomial form 
$\omega=\sum P_{i,j}dz_{i,j}$, where 
$\deg(P_{1,1})=\deg(P_{2,1})=(d_1-1,d_2)=(r-1,2)$, $\deg(P_{1,2})=(d_1,d_2-1)=(r,1)$ and $\deg(P_{2,2})=(d_1-r,d_2-1)=(0,1)$. 
Then, the polynomials $P_{i,j}$ are of the form: $P_{1,1}=Az_{1,2}^2$, $P_{2,1}=Bz_{1,2}^2$, 
$P_{1,2}=Cz_{1,2}+\alpha z_{2,2}$ and $P_{2,2}=\beta z_{1,2}$, 
for certain polynomials $A,B,C \in \mathbb{C}\left[z_{1,1},z_{2,1}\right]$ and $\alpha,\beta\in \mathbb{C}$. 
Since $i_{R_2}\omega=0$, we have
$$0=z_{1,2}P_{1,2}+z_{2,2}P_{2,2}=Cz_{1,2}^2+\alpha z_{1,2}z_{2,2}+\beta z_{2,2}z_{1,2},$$
which implies that $\alpha+\beta=0$ and $C=0$. On the other hand, since $i_{R_1}\omega=0$, we have
$$0=z_{1,1}P_{1,1}+z_{2,1}P_{2,1}+rz_{2,2}P_{2,2}=z_{1,1}Az_{1,2}^2+z_{2,1}Bz_{1,2}^2+r\beta z_{2,2}z_{1,2},$$
which implies that $\beta=0$. Therefore $\mathcal{F}$ 
is induced by 
$$\omega=P_{1,1}dz_{1,1}+P_{2,1}dz_{2,1}=z_{1,2}^2\big(Adz_{1,1}+Bdz_{2,1}\big),$$
that contradicts the fact that $\mathcal{F}$ is regular. Now, let us show $d_2=3$ and $2d_1-3r=1$ is not possible.
Suppose $d_2=3$ and $2d_1-3r=1$, then
$\mathcal{F}$ is induced by a polynomial form 
$\omega=\sum P_{i,j}dz_{i,j}$, where 
$\deg(P_{1,1})=\deg(P_{2,1})=(d_1-1,d_2)=(\frac{3r-1}{2},3)$, $\deg(P_{1,2})=(d_1,d_2-1)=(\frac{3r+1}{2},2)$ 
and $\deg(P_{2,2})=(d_1-r,d_2-1)=(\frac{r+1}{2},2)$. 
Then, the polynomials $P_{i,j}$ are of the form: 
$P_{1,1}=A_1z_{1,2}^3+A_2z_{1,2}^2z_{2,2}$, 
$P_{2,1}=B_1z_{1,2}^3+B_2z_{1,2}^2z_{2,2}$, 
$P_{1,2}=C_1z_{1,2}^2+C_2z_{1,2}z_{2,2}+\alpha z_{2,2}^2$ and 
$P_{2,2}=Dz_{1,2}^2+\beta z_{1,2}z_{2,2}$, 
for certain polynomials $A_1,A_2,B_1,B_2,C_1,C_2,D \in \mathbb{C}\left[z_{1,1},z_{2,1}\right]$ and 
$\alpha,\beta\in \mathbb{C}$, with $\alpha=\beta=0$ if $r\neq1$. 
Since $i_{R_1}\omega=0$, we have
$z_{1,1}P_{1,1}+z_{2,1}P_{2,1}+rz_{2,2}P_{2,2}=0$, which implies that $\beta=0$. 
On the other hand, since $i_{R_2}\omega=0$, we have
$z_{1,2}P_{1,2}+z_{2,2}P_{2,2}=0$, which implies $\alpha+\beta=0$. 
Therefore $\mathcal{F}$ is induced by
$$\omega=z_{1,2}\,\omega_0,$$
that contradicts the fact that $\mathcal{F}$ is regular. \\
Let us see the case $d_1=r=0$ and $d_2=2$. Then $\mathcal{F}$ is induced by a polynomial form 
$\omega=\sum P_{i,j}dz_{i,j}$ with 
$\deg(P_{1,1})=\deg(P_{2,1})=(d_1-1,d_2)=(-1,2)$, $\deg(P_{1,2})=(d_1,d_2-1)=(0,1)$ and $\deg(P_{2,2})=(d_1-r,d_2-1)=(0,1)$. 
Then $\omega=\left(a_{1,2}z_{1,2}+a_{2,2}z_{2,2}\right)dz_{1,2}+\left(b_{1,2}z_{1,2}+b_{2,2}z_{2,2}\right)dz_{2,2}$. 
Since $i_{R_2}\omega=0$, we have $a_{1,2}=b_{2,2}=0$ and $a_{2,2}+b_{1,2}=0$. Therefore $\mathcal{F}$ 
is induced by
$$\omega=k\left(z_{2,2}dz_{1,2}-z_{1,2}dz_{2,2}\right),\,\,\,k\in\mathbb{C}^{\ast}.$$
Similarly, if $d_1=2$ and $d_2=0$, then $\mathcal{F}$ is induced by a polynomial form 
$\omega=\sum P_{i,j}dz_{i,j}$ with 
$\deg(P_{1,1})=\deg(P_{2,1})=(d_1-1,d_2)=(1,0)$, $\deg(P_{1,2})=(d_1,d_2-1)=(2,-1)$ and $\deg(P_{2,2})=(d_1-r,d_2-1)=(2-r,-1)$. 
Then $\omega=\left(a_{1,1}z_{1,1}+a_{2,1}z_{2,1}\right)dz_{1,1}+\left(b_{1,1}z_{1,1}+b_{2,1}z_{2,1}\right)dz_{2,1}$. 
Since $i_{R_1}\omega=0$, we have $a_{1,1}=b_{2,1}=0$ and $a_{2,1}+b_{1,1}=0$. Therefore $\mathcal{F}$ 
is induced by
$$\omega=k\left(z_{2,1}dz_{1,1}-z_{1,1}dz_{2,1}\right),\,\,\,k\in\mathbb{C}^{\ast}.$$
Finally, $\mathcal{F}$ has a unique singularity if and only if $(d_2-1)\big(d_2r-2(d_1-1)\big)=-1$. 
Now, it is easy to see that this last equation has no integer solutions.
\end{proof}

\begin{exe} Consider the $n$-dimensional rational normal scroll $\mathbb{P}_{\Delta}=\mathbb{F}(a)=
\mathbb{F}(a_1,\ldots,a_n)$, $a_1,\ldots,a_n\in\mathbb{Z}$.
Here $\mathrm{Pic}\big(\mathbb{F}(a)\big)=\mathbb{Z}L\oplus\mathbb{Z}M$, 
subject to the relations $L^2=0$, $M^n=\left|a\right|=\sum_{i=1}^n a_i$ and $M^{n-1}\cdot L=1$. Moreover
$h_1=h_2=L$ and $h_i'=h_{i+2}=-a_iL+M$ for all $i=1,\ldots,n$; see \cite{Re}.
In particular we have 
\begin{eqnarray*}
\texttt{C}_n(h)&=&\texttt{C}_n\left(h_1,h_2,h_1',\ldots,h_n'\right) \\
&=& h_1' \cdots h_n' + 2\sum_{i_1<\cdots<i_{n-1}} L\cdot h_{i_1}' \cdots h_{i_{n-1}}' \\
&=& M^n - \left(\sum_{i=1}^n a_i\right) M^{n-1} \cdot L + 2 \sum_{i_1<\cdots<i_{n-1}} L\cdot M^{n-1} \\
&=& 2n.
\end{eqnarray*}

Let $\mathcal{D}$ be a codimension one distribution of degree $d=(d_1, d_2)$ on $\mathbb{F}(a)$. 
Then, $\mathcal{D}$ is singular if $\gcd \left(d_1,d_2\right) \nmid 2n$. Noting that $\mathbb{F}(-a,0)\simeq\mathcal{H}_a$.
More generally we prove the following theorem:
\end{exe}

\begin{teo} \label{A} Let $\mathcal{D}$ be a codimension one distribution of degree $d=(d_1, d_2)$ on 
$\mathbb{F}(a)=\mathbb{F}(a_1,\ldots,a_n)$, $n>2$.
Then, $\mathcal{D}$ is regular if and only if $d_1=2$ and $d_2=0$. Then, in homogeneous coordinates, $\mathcal{D}$ is induced by
$$\omega=z_{1,2}dz_{1,1}-z_{1,1}dz_{1,2}.$$
In particular $\mathcal{D}$ is integrable and admits a rational first integral.
Moreover $\mathcal{D}$ is the natural fibration given by $\mathbb{F}(a)\rightarrow\mathbb{P}^1$.
\end{teo}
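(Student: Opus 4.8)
The plan is to run the Bott-type formula of Proposition~\ref{teo1}, turn the regularity of $\mathcal{D}$ into a Diophantine equation in $d_1,d_2$, and then read off the normal form in homogeneous coordinates. First I would record the intersection data of $\mathbb{F}(a)$ collected in the Example preceding the statement: $\mathrm{Pic}\big(\mathbb{F}(a)\big)=\mathbb{Z}L\oplus\mathbb{Z}M$ with $L^2=0$, $M^{n-1}\cdot L=1$, $M^n=|a|=\sum_i a_i$, ray classes $h_1=h_2=L$ and $h_{i+2}=M-a_iL$, and degree $d=(d_1,d_2)$ meaning the class $d=d_1L+d_2M$. Since $\mathbb{F}(a)$ is a smooth compact toric manifold and $\mathcal{D}$ is generic, $\mathcal{D}$ is regular if and only if
\[
\#\mathrm{Sing}(\mathcal{D})=\int_{\mathbb{F}(a)}c_n\big(\Omega^1_{\mathbb{F}(a)}(d)\big)=\int_{\mathbb{F}(a)}\sum_{j=0}^n(-1)^j\,\texttt{C}_j(h)\,d^{\,n-j}=0 .
\]

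Next I would compute this number. From the generalized Euler sequence one has $c\big(\mathcal{T}\mathbb{F}(a)\big)=(1+L)^2\prod_{i=1}^n(1+M-a_iL)$, which, using $L^2=0$, collapses to $(1+M)^n+L(1+M)^{n-1}\big(2+2M-|a|\big)$; this gives each $\texttt{C}_j(h)$ explicitly as a polynomial in $L$ and $M$. Plugging $d=d_1L+d_2M$ into $\sum_j(-1)^j\texttt{C}_j(h)d^{\,n-j}$ and using $L^2=0$ once more, only the monomials $M^n$ and $M^{n-1}L$ survive after $\int_{\mathbb{F}(a)}$, and a bit of bookkeeping yields the closed form
\[
\#\mathrm{Sing}(\mathcal{D})=(d_2-1)^{n-1}\big(|a|\,d_2+n\,d_1\big)-2\sum_{k=0}^{n-1}(-1)^k(d_2-1)^{\,n-1-k}.
\]
I would cross-check this against the Hirzebruch formula already in Corollary~\ref{B} (the case $n=2$, with $\mathcal{H}_r=\mathbb{F}(-r,0)$) and against $\mathbb{F}(1,\dots,1)\simeq\mathbb{P}^1\times\mathbb{P}^{n-1}$.

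The third step is the arithmetic. Multiplying by $d_2$ and using $d_2\sum_{k=0}^{n-1}(-1)^k(d_2-1)^{n-1-k}=(d_2-1)^n-(-1)^n$, the equation $\#\mathrm{Sing}(\mathcal{D})=0$ becomes
\[
(d_2-1)^{n-1}\big(|a|\,d_2^{\,2}+(n d_1-2)\,d_2+2\big)=2(-1)^{n-1}.
\]
Hence $(d_2-1)^{n-1}$ divides $2(-1)^{n-1}$; in particular it is nonzero, and since $n-1\geq 2$ (this is where $n>2$ is used) an $(n-1)$-st power of absolute value $\le 2$ must be $\pm1$, so $d_2-1=\pm1$, i.e. $d_2\in\{0,2\}$. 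For $d_2=0$ the formula gives $\#\mathrm{Sing}(\mathcal{D})=n(-1)^{n-1}(d_1-2)$, hence $d_1=2$; for $d_2=2$ one is left with the single relation $n d_1=1-(-1)^n-2|a|$. For the solution $d=(2,0)$ the degree bookkeeping forces $P_{2,i}=0$ (their $M$-degree is $-1$) and $P_{1,1},P_{1,2}$ to be linear in $z_{1,1},z_{1,2}$; imposing $i_{R_1}\omega=i_{R_2}\omega=0$ then pins $\omega$ down, up to a scalar, to $z_{1,2}dz_{1,1}-z_{1,1}dz_{1,2}=z_{1,2}^{\,2}\,d\!\left(z_{1,1}/z_{1,2}\right)$, whose zero locus $\{z_{1,1}=z_{1,2}=0\}$ lies in $\mathcal{Z}$. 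Therefore this $\mathcal{D}$ is regular, it is integrable, it has the rational first integral $z_{1,1}/z_{1,2}$, and it is the canonical fibration $\mathbb{F}(a)\to\mathbb{P}^1$.

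The remaining, and main, obstacle is the sporadic case $d_2=2$: when $n d_1=1-(-1)^n-2|a|$ admits an integer solution one must still rule out a \emph{generic} distribution of that degree. I would treat this in the spirit of the proof of Corollary~\ref{B}: after normalizing the $a_i$ by a shift, examine the graded pieces $\mathrm{S}_{(d_1-1,\,2)}$ and $\mathrm{S}_{(d_1+a_i,\,1)}$ (note $d_1\leq -2$ here, so these are heavily constrained), use the Euler relations $i_{R_1}\omega=i_{R_2}\omega=0$ to show that every coefficient polynomial $P_{i,j}$ is divisible by a common variable $z_{2,\ell_0}$ — typically the one with $a_{\ell_0}$ maximal — and conclude that $\omega$ then vanishes on the toric divisor $\{z_{2,\ell_0}=0\}$, so that $\mathrm{Sing}(\mathcal{D})$ is positive-dimensional and $\mathcal{D}$ is not generic. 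This divisibility analysis, whose outcome depends on the arithmetic of $(a_1,\dots,a_n)$, is the delicate point; everything preceding it is routine intersection theory together with linear algebra in homogeneous coordinates.
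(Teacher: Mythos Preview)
Your route through Proposition~\ref{teo1} and the intersection ring of $\mathbb{F}(a)$ is the paper's; your factored Diophantine equation $(d_2-1)^{n-1}\big(|a|\,d_2^{2}+(nd_1-2)d_2+2\big)=2(-1)^{n-1}$ is in fact slightly sharper than the paper's form, which only extracts a single factor $d_2-1$ and then discards $d_2\in\{-1,3\}$ separately using $n>2$. Your treatment of the solution $d=(2,0)$ is identical to the paper's.

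The step you flag as the obstacle --- ruling out $d_2=2$ --- is where your proposal diverges, and the mechanism you suggest is not the one that works. The paper does \emph{not} hunt for a common coordinate factor $z_{2,\ell_0}$, and the argument requires no normalization of the $a_i$ and no case analysis in their arithmetic. It runs uniformly: since $d_2=2$, each $P_{2,i}\in\mathrm{S}_{(d_1+a_i,\,1)}$ is linear in the $z_{2,\ast}$ with coefficients in $\mathbb{C}[z_{1,1},z_{1,2}]$; from $i_{R_2}\omega=\sum_i z_{2,i}P_{2,i}=0$ the paper concludes $P_{2,i}=0$ for every $i$. The remaining relation $i_{R_1}\omega=0$ then collapses to $z_{1,1}P_{1,1}+z_{1,2}P_{1,2}=0$, whence $P_{1,1}=z_{1,2}Q$, $P_{1,2}=-z_{1,1}Q$ for some $Q\in\mathrm{S}_{(d_1-2,\,2)}$, and $\omega=Q\cdot(z_{1,2}\,dz_{1,1}-z_{1,1}\,dz_{1,2})$. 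Since $Q$ has $M$-degree $2>0$ it is non-constant, so $\{Q=0\}$ is a hypersurface contained in $\mathrm{Sing}(\mathcal{D})$, contradicting regularity (indeed, contradicting genericity). Thus the hidden common factor is the full bidegree-$(d_1-2,2)$ polynomial $Q$, not a single variable $z_{2,\ell_0}$; once you see this, the ``delicate point'' disappears.
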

\begin{proof} Set $d=d_1L+d_2M$ and put $\left|a\right|=\sum a_i$. Then, $\mathcal{D}$ is regular if and only if
\begin{eqnarray} \label{scro}
\sum_{i=0}^{n}(-1)^i d^{\,n-i}\texttt{C}_i(h)=0.
\end{eqnarray}
On the other hand, for $m>0$ we have 
$$d^{\,m}=\sum_{i=0}^m\binom{m}{i}(d_1L)^{m-i}(d_2M)^i=md_1d_2^{\,m-1}LM^{m-1}+d_2^{\,m}M^m,$$
and 
\begin{eqnarray*}
\texttt{C}_m(h) &=& 2\sum_{i_1<\cdots<i_{m-1}}L\prod_{k=1}^{m-1}(-a_{i_k}L+M)+
\sum_{i_1<\cdots<i_m}\prod_{k=1}^{m}(-a_{i_1}L+M) \\
&=& 2\sum_{i_1<\cdots<i_{m-1}}LM^{m-1}+\sum_{i_1<\cdots<i_m}\Big(-\left(\Sigma_{k=1}^ma_k\right)LM^{m-1}+M^m\Big) \\
&=& 2\binom{n}{m-1} LM^{m-1}-\Big(\sum_{i_1<\cdots<i_m}\left(\Sigma_{k=1}^ma_k\right)\Big) LM^{m-1}+\binom{n}{m} M^m \\
&=& 2\binom{n}{m-1} LM^{m-1}- \left|a\right|\binom{n-1}{m-1} LM^{m-1}+\binom{n}{m} M^m \\
&=& \left(2\binom{n}{m-1} - \left|a\right|\binom{n-1}{m-1}\right) LM^{m-1}+\binom{n}{m} M^m.
\end{eqnarray*}


Then, for $0<i<n$, we have
$$d^{\,n-i}\texttt{C}_i(h) = \left(n-i\right)\binom{n}{i}d_1d_2^{\,n-i-1}+\left|a\right|\binom{n}{i}d_2^{\,n-i}+
\left(2\binom{n}{i-1} - \left|a\right|\binom{n-1}{i-1}\right)d_2^{\,n-i}.$$

Substituting into (\ref{scro}) we obtain 
\begin{eqnarray*}
0&=&\left(nd_1d_2^{\,n-1}+\left|a\right|d_2^{\,n}\right)+\sum_{i=1}^{n-1}(-1)^id^{\,n-i}\texttt{C}_i(h)+(-1)^n\texttt{C}_n(h) \\
&=& nd_1d_2^{\,n-1}+\left|a\right|d_2^{\,n}+\sum_{i=1}^{n-1}(-1)^i\left(n-i\right)\binom{n}{i}d_1d_2^{\,n-i-1}
+\left|a\right|\sum_{i=1}^{n-1}(-1)^i\binom{n}{i}d_2^{\,n-i} \\
&&+\sum_{i=1}^{n-1}(-1)^i\left(2\binom{n}{i-1} - \left|a\right|\binom{n-1}{i-1}\right)d_2^{\,n-i}+(-1)^n2n \\
&=& d_1\sum_{i=0}^{n-1}(-1)^i\left(n-i\right)\binom{n}{i}d_2^{\,n-i-1}
-2\sum_{i=1}^{n-1}(-1)^{i-1}\binom{n}{i-1}d_2^{\,n-i}+(-1)^n 2n \\
&& +\left|a\right| \left(d_2^{\,n}+\sum_{i=1}^{n-1}(-1)^i\left(\binom{n}{i}-\binom{n-1}{i-1}\right)d_2^{\,n-i}\right) \\
&=& nd_1\sum_{i=0}^{n-1}(-1)^i \binom{n-1}{i}d_2^{\,n-1-i}
-2\left(\,\sum_{i=1}^{n-1}(-1)^{i-1}\binom{n}{i-1}d_2^{\,n-i}+(-1)^n(1-n)\right)+2(-1)^n \\
&& +\left|a\right| d_2\left(d_2^{\,n-1}+\sum_{i=1}^{n-1}(-1)^i\binom{n-1}{i}d_2^{\,n-1-i}\right) \\
&=& nd_1(d_2-1)^{n-1}
-2P(d_2)+2(-1)^n +\left|a\right| d_2 (d_2-1)^{n-1},
\end{eqnarray*}  


\noindent where $P(t)=\sum_{i=1}^{n-1}(-1)^{i-1}\binom{n}{i-1}t^{\,n-i}+(-1)^n(1-n)=
t\sum_{i=0}^{n-2}(-1)^{i}\binom{n}{i}t^{\,n-2-i}+(-1)^n(1-n)$. 
Note that $P(1)=\sum_{i=0}^{n-2}(-1)^{i}\binom{n}{i}+(-1)^n(1-n)=\sum_{i=0}^{n}(-1)^{i}\binom{n}{i}=0$, 
so there exists a polynomial $Q(t)\in\mathbb{Z}\left[t\right]$ such that $P(t)=(t-1)Q(t)$.
Then we have, $\mathcal{D}$ is regular if and only if 
\begin{eqnarray} \label{dio}
(d_2-1)\Big((nd_1+\left|a\right|d_2)(d_2-1)^{\,n-2}-2Q(d_2)\Big)=2(-1)^{n+1}.
\end{eqnarray}
Therefore $d_2\in\left\{-1, 0, 2, 3\right\}$. Note that $d_2\neq -1, 3$ because $n>2$ in (\ref{dio}). 
If $d_2=0$, substituting into (\ref{dio}) we have
$$d_1=\frac{2(-1)^n+2Q(0)}{n(-1)^{n-2}}=\frac{2(-1)^n-2(-1)^n(1-n)}{n(-1)^{n-2}}=2.$$
 
\noindent Finally, when $d_2=2$, upon substituting into equation (\ref{dio}), we obtain
\begin{eqnarray*}
d_1&=&\frac{2P(2)+2(-1)^{n+1}-2\left|a\right|}{n} \\
&=& \frac{2}{n}\big(P(2)-(-1)^n-\left|a\right|\big) \\
&=&\frac{2}{n}\left(2\sum_{i=0}^{n-2}(-1)^{i}\binom{n}{i}2^{\,n-2-i}-n(-1)^n -\left|a\right|\right). \\
&=&\frac{1}{n}\left(\,\sum_{i=0}^{n-2}(-1)^{i}\binom{n}{i}2^{\,n-i}-2n(-1)^n -2\left|a\right|\right). \\
&=&\frac{1}{n}\left(\,\sum_{i=0}^{n}(-1)^{i}\binom{n}{i}2^{\,n-i}
-(-1)^{n-1}\binom{n}{n-1}2^1-(-1)^n\binom{n}{n}2^0-2n(-1)^n-2\left|a\right|\right)\\
&=&\frac{1}{n}\big(\,(2-1)^n+2n(-1)^n-(-1)^n-2n(-1)^n -2\left|a\right|\big)\\
&=&\frac{1}{n}\big(1+(-1)^{n+1}-2\left|a\right|\big).
\end{eqnarray*} 
Suppose $d_2=2$ and $d_1=\frac{1}{n}\big(1+(-1)^{n+1}-2\left|a\right|\big)$. Then 
$\mathcal{D}$ is induced by a polynomial form 
$\omega=\sum P_{i,j}dz_{i,j}$, where 
$\deg(P_{1,1})=\deg(P_{1,2})=(d_1-1,d_2)=(d_1-1,2)$ and $\deg(P_{2,i})=(d_1+a_i,d_2-1)=(d_1+a_i,1)$. 
Then, the polynomials $P_{i,j}$ are of the form: 
$$P_{1,1}=\sum_{q_i+q_j=2}A_{ij}\,z_{2,i}^{q_i}z_{2,j}^{q_j},\,\,P_{1,2}=\sum_{q_i+q_j=2}B_{ij}\,z_{2,i}^{q_i}z_{2,j}^{q_j},\,\,
\mbox{and}\,\,P_{2,i}=\sum_{i=1}^nC_i\,z_{2,i},$$
for certain polynomials $A_{ij},B_{ij},C_i \in \mathbb{C}\left[z_{1,1},z_{1,2}\right]$. 
Since $i_{R_2}\omega=0$, we have
$0=\sum_{i=1}^n z_{2,i}P_{2,i}=\sum_{i=1}^n C_iz_{2,i}^2$, 
which implies that $C_i=0$. On the other hand, since $i_{R_1}\omega=0$, we have
$z_{1,1}P_{1,1}+z_{1,2}P_{1,2}=0$, which implies that $P_{1,1}=z_{1,2}Q$ and $P_{1,2}=-z_{1,1}Q$, for some non-constant polynomial $Q$. 
Therefore $\mathcal{D}$ is induced by
$$\omega=Q\,\omega_0,$$
that contradicts the fact that $\mathcal{D}$ is regular. \\
Let us see the case $d_1=2$ and $d_2=0$. 
Then $\mathcal{D}$ is induced by a polynomial form 
$\omega=\sum P_{i,j}dz_{i,j}$ with 
$\deg(P_{1,1})=\deg(P_{1,2})=(1,0)$ and $\deg(P_{2,i})=(2+a_i,-1)$ for all $i=0,\ldots,n$.
Then $\omega=\left(a_{1,1}z_{1,1}+a_{1,2}z_{1,2}\right)dz_{1,1}+\left(b_{1,1}z_{1,1}+b_{1,2}z_{1,2}\right)dz_{1,2}$. 
Since $i_{R_1}\omega=0$, we have $a_{1,1}=b_{1,2}=0$, and $a_{1,2}+b_{1,1}=0$. Therefore $\mathcal{D}$ 
is induced by
$$\omega=k\left(z_{1,2}dz_{1,1}-z_{1,1}dz_{1,2}\right),\,\,\,k\in\mathbb{C}^{\ast}.$$
Note that $\mathrm{Sing}(\omega)=Z(z_{1,1},z_{1,2})\subset\mathcal{Z}$.
\end{proof}

The following theorem characterizes all regular distributions on a well formed weighted projective space:

\begin{teo} \label{wei} Let $\mathbb{P}(\omega)$ be a well formed $n$-dimensional weighted projective space.
Let $\mathcal{D}$ be a codimension one distribution of degree $d$ on $\mathbb{P}(\omega)$. Then 
$\mathcal{D}$ is regular if and only if $n$ is odd and 
$$\prod_{i=0}^n\omega_i=\prod_{i=0}^{n}\left(d-\omega_i\right).$$
\end{teo}
\begin{proof} By Corollary \ref{orbcor} $(r=1)$ we have, $\mathcal{D}$ is regular if and only if
$$\prod_{i=0}^{n}\left(d-\omega_i\right)=(-1)^{n+1}\texttt{C}_{n+1}(\omega).$$
Suppose $n$ is even, then there exists $i_0$ such that $d-\omega_{i_0}<0$.
This is a contradiction since in homogeneous coordinates, $\mathcal{D}$ is induced by a polynomial form 
$\Omega=\sum_{i}P_{i}dz_i$ with $\deg(P_i)=d-\omega_i$ for all $i=0,\ldots,n$, and so $n$ is odd.
\end{proof}

Finally, let us utilize Theorem \ref{wei} to construct a family of examples: 

\begin{exe} \label{expr} Let us consider the $(2m+1)$-dimensional well formed weighted projective space
$\mathbb{P}(w)=\mathbb{P}(w_0,w_1,\ldots,w_{2m},w_{2m+1})$. Let $\sigma\in \textsc{S}_{2m+2}$ be a permutation such that 
$\sigma^2=1$ and $\sigma(i)\neq i$ for all $i$.
Suppose $\omega_i+\omega_{\sigma(i)}=d$ is constant for all $i$. Now, in homogeneous coordinates, consider the $1$-form 
$$\Omega_{\sigma}=\sum_{i=0}^m c_i \Big(\omega_{i}z_{i}dz_{\sigma(i)} - 
\omega_{\sigma(i)}z_{\sigma(i)}dz_{i}\Big),\,\,c_i\in\mathbb{C}^{\ast}.$$
Note that $i_R\,\Omega_{\sigma}=0$, where $R=\sum_i\omega_iz_i\frac{\partial}{\partial z_i}$. Then $\Omega_{\sigma}$ 
induces a non-integrable regular distribution of degree $d$. Moreover, if we denote by $K_{\mathbb{P}(w)}$ the canonical divisor 
of $\mathbb{P}(w)$, then we conclude that
$$\mathcal{O}_{\mathbb{P}(w)}\hspace{-0.07cm}\left(-K_{\mathbb{P}(w)}\right)=\mathcal{O}_{\mathbb{P}(w)}\big(\Sigma w_i\big)
=\mathcal{O}_{\mathbb{P}(w)}(d)^{\otimes(m+1)},$$
that is, the $1$-form $\Omega_{\sigma}$ is a contact form. 
\end{exe}

\begin{exe}
The well formed weighted projective space $\mathbb{P}(1,w_1,w_2,w_3)$ does not admit a regular distribution
if $w_1\geq 2\max\left\{w_2, w_3\right\}$. In fact, suppose that there exists a regular distribution of degree $d$, 
then by Theorem \ref{wei} we have $(d-1)(d-w_1)(d-w_2)(d-w_3)=w_1w_2w_3$, which is a contradiction because 
the equation $(x-1)(x-w_1)(x-w_2)(x-w_3)=w_1w_2w_3$ does not admit integer solution for $x>w_1$; the polynomial
$p(x)=(x-1)(x-w_1)(x-w_2)(x-w_3)-w_1w_2w_3$ has a zero in $\left\langle w_1,w_1+1\right\rangle$ and $p'(x)>0$ for $x>w_1$.
Moreover, note that for $w_1=w_2+w_3-1< 2\max\left\{w_2, w_3\right\}$ there are regular distributions of degree $d=w_1+1=w_2+w_3$.
More generally, the weighted projective space $\mathbb{P}(1,w_1,w_2,\ldots,w_{2m+1})$ does not admit a 
regular distribution if $w_{i_0}\geq 2\max_{\,i\neq i_0}\left\{w_i\right\}$ for some $i_0$.
\end{exe}

\bigskip

\noindent{\footnotesize \textsc{Acknowlegments.}
I would like to be thankful to Maur\'icio Corr\^ea for interesting conversations.
I also thank Alan Muniz, Arturo Fern\'andez and the referee for useful suggestions which improved the readability of this paper.}


\medskip

\end{document}